\newcommand{\R}{{{\mathbb {R}}}}
\newcommand{\C}{{{\mathbb C}}}
\newcommand{\Z}{{\mathbb Z}}
\newcommand{\U}{{\mathbb U}}
\newcommand{\HH}{{\mathbb H}}
\newtheorem{Theorem}{Theorem}
\newtheorem {lemma} [Theorem]    {Lemma}
\newtheorem {corollary}  [Theorem]    {Corollary}
\newtheorem {proposition}[Theorem]    {Proposition}
\newtheorem {theorem}[Theorem]    {Theorem}
\newcommand{\eps}{\epsilon}
\newcommand{\comment}[1]{}
\newcommand{\nucle}{\nu^{\rm cle}}
\let\oldmarginpar\marginpar
\renewcommand\marginpar[1]{\-\oldmarginpar[\raggedleft\scriptsize #1]%
{\raggedright\scriptsize #1}}
\newcommand{\de}{{\mathrm d}}
\newcommand{\ind}{{\mathbbm{1}}}
\newcommand{\PR}{{\mathbf P}}
\newcommand{\EX}{{\mathbf E}}
\begin{document}

\title{The nested simple conformal loop ensembles in the Riemann sphere}
\author{Antti Kemppainen \and Wendelin Werner}
\date {University of Helsinki  and ETH Z\"urich}
\maketitle

\begin {abstract}
Simple conformal loop ensembles (CLE) are a class of random collection of simple non-intersecting loops that are of particular interest in the study of conformally invariant systems. 
Among other things related to these CLEs, we prove the invariance in distribution of their nested ``full-plane'' versions under the inversion $z \mapsto 1/z$.
\end {abstract}

\section{Introduction}

In \cite {Sh,ShW}, a one-dimensional natural class of random collections of simple loops in simply connected domains called Conformal Loop Ensembles has been defined and studied.
We refer to the introduction of \cite {ShW} for a detailed account of the motivations that lead to their study. 
There are two basically equivalent (i.e. defining one enables to define the other one) versions of these simple CLEs, depending on whether one allows loops to be nested (i.e. one loop can surround another loop) or not. Let us recall various definitions and basic features of the latter (i.e. the non-nested) ones:

Such a CLE is defined in a simply connected planar domain $D$ (with $D \not= \C$) and it is a random countable collection $\Gamma = \{ \gamma_j, j \in J\}$ of simple loops
that are all contained in $D$, that are disjoint (no two loops intersect) and non-nested (no loop in this collection surrounds another loop in this collection). Furthermore,
the law of this random collection of loops is invariant under any conformal transformation from
$D$ onto itself, and the image of $\Gamma$ under any given conformal map from $D$ onto some other domain $D'$ is a CLE in $D'$. 
The laws of CLEs can be characterized by an additional condition, called ``Markovian exploration'' that is described and discussed in \cite {ShW}.  

Alternatively, see also \cite {ShW}, 
one can view CLEs as the collections of outer boundaries of outermost clusters in Poissonian collections of Brownian loops in $D$. 
Roughly speaking, one considers a Poissonian collection of Brownian loops in $D$. As opposed to the previous CLE loops, the Brownian loops are not simple, and they are 
allowed to overlap and intersect (and they often do, since they are sampled in a Poissonian -- basically independent -- way). Then one looks at the connected components of the unions of all these loops (i.e. one
hooks up intersecting Brownian loops into clusters). It turns out that when the intensity of this Poisson collection of Brownian loops is not large, then there are several (in fact infinitely many) such clusters. Then, one only keeps the outer boundaries of these clusters (that turn out to be simple loops) and finally only keeps the outermost ones (as some clusters can surround others), one obtains a random collection of non-nested simple loops in $D$.
It turns out that it is a CLE, and that this procedure (letting the intensity of the Brownian loops vary) does in fact construct all possible CLE laws.

A third description description relies on Oded Schramm's SLE processes \cite {Sch}. It turns out that the loops in a CLE are very closely related to SLE$_\kappa$ curves, where the  parameter 
$\kappa$ lies in the interval $(8/3, 4]$ (there is one CLE law for each such $\kappa$, this is called the CLE$_\kappa$), see again \cite {ShW}. This relation will be also useful in the present paper, as it is the one that exhibits some inside-outside symmetry property of the law of the loops. The precise SLE-based construction of the CLEs goes via SLE-based exploration tree (as explained in \cite {Sh}) or via a Poisson point process of SLE bubbles (see \cite {ShW}).

Finally, there is also a close and important relation between CLEs and the Gaussian Free Field (see e.g. \cite {MS1,MS2,MS3,MS4} and the references therein) that we will briefly mention below, but, as opposed to the previous descriptions, we will not build on it in the present paper.

It is noteworthy to stress that these definitions of CLE all a priori take place in simply connected domains with boundary. 

\begin{figure}[ht]
\begin{center}
\includegraphics[scale=0.7]{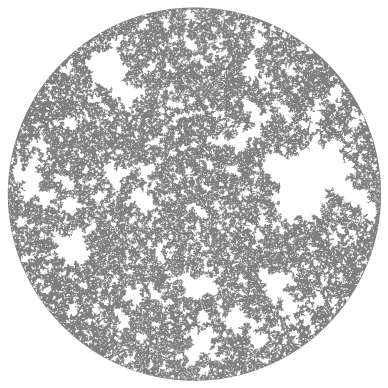}
\end{center}
\caption {A simple non-nested CLE$_4$ in the unit disc (simulation by D.B. Wilson): The loops are the boundaries of the white islands and they are not nested.}
\end {figure}

\medbreak

These loop models are of interest, in particular because they are the conjectural scaling limits of various discrete lattice models. For instance, the loops of the CLE should be the scaling limit of the outermost interfaces in  various models from statistical physics 
(such as the critical Ising model) where some particular boundary conditions are imposed on the boundary of the (lattice-approximation of the) domain $D$. 
Loosely speaking, the boundary of the domain is therefore playing itself the role of an interface i.e. of another loop.
This leads to the very natural definition of the {\em nested CLE} in the domain $D$ which is defined from a simple non-nested CLEs in an iterative i.i.d. fashion (like for a tree-like structure): 
Sample first a non-nested CLE in $D$, then sample independent CLEs in the inside each of this first generation CLE loops and so on. This defines, for each $\kappa$ in $(8/3,4]$ and each domain $D$, a {\em nested CLE$_\kappa$}. This is again a conformally invariant collection of disjoint loops in $D$ as before, but where each given point $z$ in $D$ is now typically surrounded by infinitely many nested loops.
Conversely, if we are given a nested CLE sample in a simply connected domain, one just has to take its outermost loops to get a (non-nested) CLE sample.
These nested CLEs are conjecturally the scaling limits of 
the joint laws of all the interfaces, including all the nested generations,
of a wide class of 
two-dimensional models from statistical physics, such for instance as the $O(n)$ models.


\medbreak

\begin{figure}[ht]
\begin{center}
\includegraphics[scale=0.4]{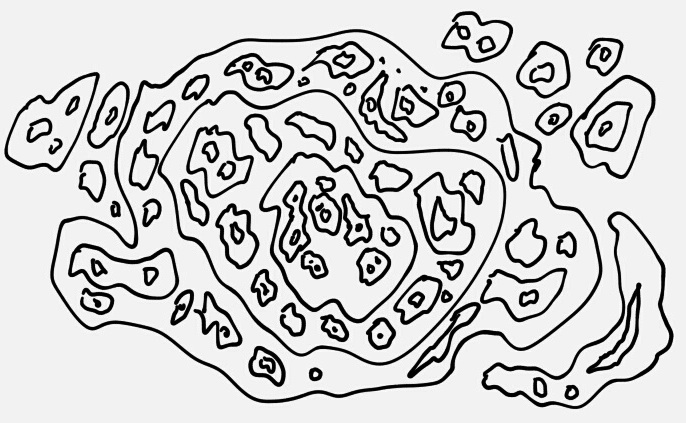}
\end{center}
\caption {Sketch of a nested CLE}
\end {figure}

One of our goals in the present paper is to study some properties of the natural version of these nested CLE 
defined in the entire plane. As we shall see in the first part of the present paper 
(this construction has been independently also written up in \cite {MWW}), the definition of the full-plane generalization of nested CLE is not 
a difficult task (building for instance on the Brownian loop-soup approach to the simple CLEs): One considers the limit when $R \to \infty$ of a 
nested CLE$_\kappa$ defined in the disc of radius $R$ around the origin, and shows that for any fixed $r$, the law of the picture restricted to this disc of radius $r$ converges as $R \to \infty$.
More precisely, one can show that it is possible to couple the nested CLEs in two very large discs of radius $R$ and $R'$ such that with a very large probability $p$, they coincide inside the disc of radius $r$ (i.e. $p$ tends to $1$ when $R, R'$ go to infinity).
Note that by scale-invariance, this procedure is equivalent to looking at the picture of a nested CLE in the unit disc and zooming at the law of the picture in the neighborhood of the origin.
It is then easy to see from this construction that the law of this ``full-plane'' family of nested random loops is translation-invariant and scale-invariant. 

However, with this definition, one property of this full-plane CLE turns out to be not obvious to establish, namely its invariance (in distribution) under the inversion $z \mapsto 1/z$.
Indeed, in the nesting procedure, there is a definite 
inside-outside asymmetry in the definition of CLEs. One always starts from the boundary of a simply connected domain, and discovers the loops ``from their outside'' (i.e. the point at infinity in the Riemann sphere plays a very special role in the construction). 

On the other hand, invariance of the full-plane CLEs under inversion is a property that is expected to hold. Indeed:
\begin {itemize}
 \item The discrete $O(n)$ models that are conjectured to prove to these CLEs have a full-plane version, for which one expects such an inside-outside symmetry. In the particular case of the Ising model (which is the $O(1)$ model) that is known to be conformally invariant in the scaling limit (see \cite {CS,CD}) and should therefore correspond to CLE$_3$, 
 there is a full-plane version of the discrete critical Ising model that should in principle be invariant under $z \mapsto 1/z$ in the scaling limit as well.
 \item In the case where $\kappa=4$, the nested CLE$_4$ can be viewed as level (or jump) lines of the Gaussian Free Field, and it is possible (though we will not do this in the present paper) to define the full-plane CLE$_4$ in terms of a full-plane version of the Gaussian Free Field (which is then defined up to an additive constant, so a little care is needed to justify this -- in particular, additional randomness is needed in order to define the nested CLE$_4$ from this full-plane GFF), and to see that the obtained CLE$_4$ is indeed invariant under $z \mapsto 1/z$, using the strong connection between CLE$_4$ and the GFF (in particular, the fact that CLE$_4$ is a deterministic function of the GFF when defined in a simply connected domain) derived in \cite {SchSh,Dub}.
\end {itemize}
While the previous provable direct connections of the full-plane CLE$_3$ and CLE$_4$ to the Ising model and the Gaussian Free Field respectively indicate quite direct roadmaps towards establishing their invariance under inversion (the CLE$_4$ case is actually quite easy), it is not immediate to adapt those ideas to the case of the other CLE$_\kappa$'s for $\kappa \in (8/3,4]$ (note for instance that the coupling between other CLEs and the GFF \cite {MS1,MS3} involves additional  randomness that does not seem to behave so nicely with respect to inversion).

One of our two main goals in this paper is to establish the following result: 

\begin {theorem}
\label {mainthm}
 For any $\kappa \in (8/3, 4]$, the law of the nested CLE$_\kappa$ in the full plane (as described above)  is invariant under $z \mapsto 1/z$ (and therefore under any 
 M\"obius transformation of the Riemann sphere).
\end {theorem}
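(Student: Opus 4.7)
The plan is to establish a ``dual'' version of the full-plane CLE construction, obtained as the limit of nested CLE$_\kappa$ in the complement of a small disc around $0$ (as this radius shrinks to $0$), and to show that it gives the same law as the original full-plane construction. Since the dual version is manifestly the image under $z \mapsto 1/z$ of the original construction (by conformal invariance of CLE in simply connected domains), the equality of the two laws is precisely the inversion invariance the theorem asks for.

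Concretely, for $r > 0$ set $\hat{B}(0,r) = \{z \in \widehat{\CC} : |z| > r\}$, which is a simply connected domain on the Riemann sphere, and let $\Gamma^*_r$ denote a nested CLE$_\kappa$ in $\hat{B}(0,r)$. Because $z \mapsto 1/z$ is a conformal bijection of $B(0,1/r)$ onto $\hat{B}(0,r)$, conformal invariance of the nested CLE gives that $\Gamma^*_r$ has the same law as the image under $z \mapsto 1/z$ of a nested CLE$_\kappa$ in $B(0,1/r)$. Hence, by the defining limit of the full-plane CLE, the law of $\Gamma^*_r$, restricted to any fixed compact subset of $\CC \setminus \{0\}$, converges as $r \downarrow 0$ to the image of the full-plane CLE under $z \mapsto 1/z$. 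The theorem will follow once we show that this restricted law also converges to the full-plane CLE itself, i.e.\ that ``CLE seen from infinity'' and ``CLE seen from zero'' agree in their common local description.

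To establish this, I would use a coupling based on the Brownian loop-soup construction of CLE. Both the nested CLE in a large disc $B(0,R)$ and the nested CLE in $\hat{B}(0,r)$ can be built as a deterministic iterated function of a Brownian loop soup in the respective domain: take outermost cluster boundaries, then (by the restriction property of the loop soup) the loops sitting inside each outermost loop form a loop soup in that interior, from which the next generation of loops is constructed, and so on. Now fix a compact annular region $A = \{\rho_1 \leq |z| \leq \rho_2\}$. Because the Brownian loop measure is conformally invariant on the Riemann sphere, the restriction to $A$ of a loop soup in $B(0,R)$ has the same law as the restriction to $A$ of a loop soup in $\hat{B}(0,r)$, for any $r < \rho_1 < \rho_2 < R$. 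One may therefore couple the two loop soups so that they coincide exactly on loops lying entirely in $A$. On the event that no loop-soup cluster crosses $A$, any outermost loop met inside $A$ by either construction is the same, and iteration inside such a loop yields the same sub-CLE on both sides. Standard tail estimates for large Brownian loop-soup clusters control the probability of crossing clusters at each scale and so let one propagate the coupling through the finitely many nesting generations needed to see any fixed number of loops around each point of a given compact set.

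The principal obstacle is the technical control over this hierarchy: at each generation one must argue that, with probability tending to $1$ as $r \downarrow 0$ and $R \to \infty$, every outermost loop produced so far that meets the region of interest lies entirely inside $A$ and therefore has an identical sub-CLE on both sides of the coupling. The failure events --- where a loop ``escapes'' $A$ and the coupling breaks down for its descendants --- must be dominated by combining exponential tail estimates for Brownian loop-soup clusters with the restriction property applied at each level of the recursion. Once this is in place, the desired convergence of $\Gamma^*_r$ to the full-plane nested CLE, and hence the inversion invariance, follows.
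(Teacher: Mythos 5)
Your reduction is the right one: proving the theorem is indeed equivalent to showing that the nested CLE in $\hat{\C}\setminus \overline{B(0,r)}$, in the limit $r\downarrow 0$, has the same local law as the full-plane CLE obtained from $B(0,R)$, $R\to\infty$. The gap is in the coupling step that is supposed to identify the two. Even on the event that the two loop soups coincide exactly on the annulus $A$ and that no cluster crosses $A$, the two constructions do \emph{not} read off the same loops from the common clusters. A CLE loop in $B(0,R)$ is the boundary of a loop-soup cluster that separates the cluster from $\partial B(0,R)$, i.e.\ its outer boundary (a loop of type $\gamma^e$ in the paper's notation), whereas a CLE loop in the complement of the small disc is the boundary separating the cluster from the circle of radius $r$, i.e.\ the boundary facing the origin (a loop of type $\gamma^i$). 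Moreover ``outermost'' is defined relative to the root point, so the two explorations select different clusters as first generation and proceed through the nested chain in opposite directions. Thus identity of the underlying soups on $A$ gives identity of clusters, not of the two CLEs; what your argument silently uses is precisely the inside--outside symmetry of cluster boundaries, which is the actual content of the theorem and not a consequence of conformal invariance of the loop measure. In the paper this is the key Proposition that $\nu^i=\nu^e$ (equality of the intensity measures of inner and outer boundaries of clusters surrounding the origin), whose proof is genuinely hard: it goes through the bubble-measure exploration of cluster boundaries, reversibility of SLE$_\kappa$, and the Minkowski-content estimates of Lawler--Rezaei, and the inversion invariance of the nested CLE is then deduced via uniqueness of the scale-invariant measure invariant under the kernel $Q^{\to e}$. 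Without an ingredient of this kind, the step ``any outermost loop met inside $A$ by either construction is the same'' fails.

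Two secondary inaccuracies: (i) the nested CLE in a domain is not a deterministic function of a single Brownian loop soup in that domain --- after the first generation one must sample fresh independent soups inside each loop, since the conditional law of the remaining soup loops inside a first-generation loop (given that loop) is not a loop soup of its interior; this could be patched by coupling the fresh soups once the loops themselves are matched, but the outer-versus-inner mismatch above cannot. (ii) Matching only the loops entirely contained in $A$ does not match the clusters meeting $A$, since such clusters generally contain loops exiting $A$; one needs a buffer sub-annulus and crossing estimates, which is routine but should be stated.
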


Since the law of nested CLE on the Riemann sphere is fully M\"obius invariant
 and hence the law doesn't depend on the choice of the root point,
 it makes sense to call it the \emph{Conformal Loop Ensemble of the Riemann sphere} with
 parameter $8/3 < \kappa \leq 4$ and denote it by CLE$_\kappa(\hat{\C})$. 
 One way to characterize this family of CLE's is that they
 are random collection of loops such that the loops are nested, pair-wise disjoint and simple
 and that they have the following \emph{restriction property}:
 if $A \subset \hat{\C}$ is a closed subset of the Riemann sphere with simply connected
 complement and if $z_0 \in \hat{\C} \setminus A$, then define the set $\tilde{A}$ to be the union of $A$
 and all the loops that intersect $A$ together with their interiors --- as seen from $z_0$, i.e.
 $z_0$ lies outside of these loops. Then the property, which we call restriction property
 of CLE$_\kappa(\hat{\C})$, is that the restriction of the CLE$_\kappa(\hat{\C})$ 
 to the loops that stay in $U = \C \setminus \tilde{A}$ is the nested CLE$_\kappa$
 in $U$.

One motivation for the present work comes from the fact that, as indicated for instance by the papers of Benjamin Doyon \cite {Do}, it is possible to use such nested CLEs in order to 
provide explicit probabilistic constructions and interpretations of various basic concepts in Conformal Field Theory (such as the bulk stress-energy tensor). The paper \cite {Do} for instance builds on 
some assumptions/axioms about nested CLEs, that we prove in the present paper.

\medbreak

An instrumental idea in the present paper will be to use a ``full-plane'' version of a variant of the Brownian loop soup, where one only keeps the outer boundary of each Brownian loop instead of the whole Brownian loop. It turns out (this fact had been established in \cite {W}) that this soup of overlapping simple loops is invariant under $z \mapsto 1/z$, and that (as opposed to the Brownian loop soup itself) it creates more than one cluster of loops when the intensity of the soup is subcritical. We will refer to this loup soup as the SLE$_{8/3}$ loop soup.
This is a random full-plane structure that is indirectly related to CLE, even though it is not the nested CLE itself. 

Actually, \emph{the other main purpose of the present paper} 
will be to derive and highlight properties of this particular full-plane structure that we think is interesting on its own right. We shall for instance see that outer boundaries of such clusters and inner boundaries are described by exactly the same intensity measure. More precisely, if one considers a full-plane SLE$_{8/3}$ loop soup, one can construct its clusters, and define those clusters $K_j$ of loops that surround the origin. Each one has an outside boundary $\gamma_j^e$ and an inside boundary $\gamma_j^i$. 
Then, one can define the intensity measures $\nu^i$ and $\nu^e$ by 
setting for each measurable set $A$ of simple loops (As the sigma algebra, we use always use
the usual sigma algebra of events of staying in annular regions, see Section~3 of \cite{W}.)
$$ \nu^i ( A) = \EX ( \# \{ j \, : \, \gamma_j^i  \in A\} ) \hbox { and }Ê \nu^e ( A) = \EX ( \# \{ j \, : \, \gamma_j^e \in A\} ) .$$
Similarly, for a full-plane CLE (with the $\kappa$ corresponding to the intensity of the loop soup), one can define the intensity measure $\nucle$ of the loops that surround the origin. Then, 
\begin {theorem}
\label {secondthm}
For some constant $\alpha=\alpha(\kappa)$, one has $\nu^i = \nu^e = \alpha \times  \nucle$.
\end {theorem}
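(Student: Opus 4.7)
The plan is to establish the two equalities via different ingredients: $\nu^e = \alpha \nucle$ by coupling the full-plane SLE$_{8/3}$ loop soup with the full-plane nested CLE$_\kappa$ using the restriction property of the Brownian loop soup, and $\nu^i = \nu^e$ by combining the inversion invariance of the loop soup from \cite{W} with Theorem~\ref{mainthm}.

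For the first identity, the goal is to couple the two objects so that the outer boundaries $\gamma_j^e$ of the clusters of the full-plane loop soup surrounding the origin (indexed from outside in) are exactly the nested CLE$_\kappa$ loops around the origin. The standard Brownian-loop-soup construction of non-nested CLE$_\kappa$ in any simply connected domain as the outer boundaries of the outermost clusters gives the base case: $\gamma_1^e$ is the outermost CLE$_\kappa$ loop around $0$. For the induction step I would use the restriction property of the loop soup: conditionally on the cluster $K_j$ (equivalently, on $\gamma_j^i$ together with the loops not contained in $\inside{\gamma_j^i}$), the loops of the soup that stay in $\inside{\gamma_j^i}$ form a fresh SLE$_{8/3}$ loop soup in that simply connected domain. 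Its outermost cluster around the origin is $K_{j+1}$, whose outer boundary $\gamma_{j+1}^e$ is therefore distributed as the outermost loop of a fresh CLE$_\kappa$ in $\inside{\gamma_j^i}$, matching exactly the next level of the nested CLE produced by the iterative definition. Iterating, the joint law of $(\gamma_j^e)_{j \geq 1}$ coincides with that of the nested CLE loops around $0$, so taking intensities gives $\nu^e = \alpha \nucle$ (in fact with $\alpha = 1$).

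For the second identity, I invoke the invariance in distribution of the full-plane SLE$_{8/3}$ loop soup under the inversion $\iota : z \mapsto 1/z$ established in \cite{W}. Because $\iota$ exchanges $0$ and $\infty$ on the Riemann sphere, it sends each cluster $K_j$ surrounding the origin to a cluster $\iota(K_j)$ that also surrounds the origin, but with its outer and inner boundaries exchanged: $\iota(\gamma_j^e)$ is the inner boundary of $\iota(K_j)$ and $\iota(\gamma_j^i)$ is its outer boundary. Hence $\nu^i = \iota_* \nu^e$. Combined with the inversion invariance of $\nucle$ given by Theorem~\ref{mainthm}, this yields $\nu^i = \iota_* \nu^e = \alpha\, \iota_* \nucle = \alpha \nucle$.

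The main obstacle is carrying out the identification in the first step rigorously at the level of the full plane. Both the full-plane SLE$_{8/3}$ loop soup and the full-plane nested CLE$_\kappa$ are constructed as limits of their restrictions to large discs $D_R$ as $R \to \infty$, and inside $D_R$ the identification of $(\gamma_j^e)$ with the nested CLE loops around $0$ is a direct consequence of the restriction property applied finitely many times. The real work lies in producing couplings of the two constructions on $D_R$, for $R$ large, that are sufficiently strong that the bijection between loop-soup clusters surrounding $0$ and nested CLE loops around $0$ inside any fixed bounded region survives the limit $R \to \infty$; once this is in place, the measure-theoretic conclusion $\nu^e = \alpha \nucle$ follows by standard arguments.
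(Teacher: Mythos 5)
Your first step contains a genuine error. In the nested CLE, conditionally on the $j$-th loop $\gamma_j$ around the origin, the next loop is the outermost CLE loop around $0$ of a fresh CLE sampled in $\inside{\gamma_j}$, i.e.\ in the interior of the previous loop itself; this is exactly the kernel $Q^{\to e}$. In the loop soup, however, conditionally on the cluster $K_j$, the next outer boundary $\gamma_{j+1}^e$ is the outermost cluster boundary around $0$ of a fresh soup in $\inside{\gamma_j^i}$, the interior of the \emph{inner} boundary of $K_j$, which is a strictly smaller domain than $\inside{\gamma_j^e}$ because the cluster has positive thickness. So the chain $(\gamma_j^e)$ has transition ``$\gamma_j^e \to \gamma_j^i \to \gamma_{j+1}^e$'', where only the second arrow is a $Q^{\to e}$-step, and its joint law agrees with the nested CLE chain only for the first loop. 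Your claimed identification (and in particular $\alpha=1$) is false; if it were true, Theorem~\ref{secondthm} would be almost immediate, whereas the point of the paper is precisely that it is not. What is actually available is: $Q^{\to e}\nu^i=\nu^e$ by construction, and $\nucle$ is (up to a constant) the unique scale-invariant measure invariant under $Q^{\to e}$, by the coupling argument of Proposition~\ref{prop: coupling of nested CLEs}. Hence $\nu^e=\alpha\,\nucle$ only follows once one knows $\nu^i=\nu^e$, which is Proposition~\ref{nui=nue} and the hard core of the paper.

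Your second step is circular. Theorem~\ref{mainthm} (inversion invariance of the full-plane nested CLE, hence of $\nucle$) is deduced in the paper \emph{from} Theorem~\ref{secondthm}; you may not invoke it to prove $\nu^i=\alpha\,\nucle$. The only inversion statement legitimately available at this stage is Proposition~\ref{prop: measures under z mapsto 1/z}, namely that the image of $\nu^i$ under $z\mapsto 1/z$ is $\nu^e$ (your description of this part is fine), but that alone does not give $\nu^i=\nu^e$. The paper's route is the opposite of yours: it first proves $\nu^i=\nu^e$ directly, by representing both measures near a marked point via the bubble-measure exploration of boundary-touching clusters, coupling the two rooted SLE$_\kappa$ bubble measures using reversibility of SLE$_\kappa$ (Lemma~\ref{thealemma}), and controlling the inner and outer $\eps$-neighborhood areas of the loop through the Minkowski-content estimates of Lawler--Rezaei (Lemmas~\ref{minkovski} and \ref{minkovski2}); then $\nu^e=Q^{\to e}\nu^i=Q^{\to e}\nu^e$ gives $\nu^e=\alpha\,\nucle$ by uniqueness of the invariant measure, and only afterwards does inversion invariance of the nested CLE follow. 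None of this machinery appears in your proposal, so the essential content of the theorem is missing.
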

In fact, the proof will go as follows (even if we will not present the arguments in that order): One first directly proves that $\nu^i = \nu^e$ (which will be the core of our proofs), and then deduce Theorem \ref {secondthm} from it using the inversion invariance of the SLE$_{8/3}$ loop soup, and then finally deduce Theorem \ref {mainthm} from Theorem \ref{secondthm}. 

This paper will be structured as follows. First, we will recall the basic properties of the SLE$_{8/3}$ loop soup, and deduce from it the definition and some first properties of the full-plane CLE$_\kappa$'s. Then, we will build on some aspects of the exploration procedure described in \cite {ShW} to define CLEs using SLE$_\kappa$ loops, and use some sample properties of SLE paths in order to derive Theorem \ref {secondthm}. 

\section{Chains of loops and clusters from the SLE$_{8/3}$ loop soup}

\subsection{Loop soups of Brownian loops and of SLE$_{8/3}$ loops}
The Brownian loop soup in $\C$ with intensity $c$ is a Poisson point process in the plane with intensity $c \mu$, where $\mu$ is the Brownian loop-measure defined in \cite {LW}. A sample of the Brownian loop soup in $D$ can be obtained from a sample of the Brownian loop soup in the entire plane, by just keeping those loops that fully stay in $D$. More precisely, if $\beta= \{\beta_j , j \in J\}$ is a Brownian loop soup in the plane with intensity $c$ and if $J_D = \{ j \in J \, : \, \beta_j \subset D \}$, then 
$\beta^D= \{\beta_j, j \in J_D\}$ is a sample of the Brownian loop soup with intensity $c$. 
It is shown in \cite {ShW} that when $c \le 1$, the Brownian loop-soup clusters in $D$ are disjoint, and that their outermost boundaries form a sample of a CLE$_\kappa$ (where $\kappa$ depends on $c$). 
{\bf For the rest of the present paper, the value of $c \in (0,1]$ (and the corresponding $\kappa (c) \in (8/3, 4]$) will remain fixed, and we will omit  them (we will just write CLE instead of CLE$_\kappa$ and loop soup instead of loop soup with intensity $c$).}

If one considers the full-plane Brownian loop soup, then because of the too many large Brownian loops (and the fact that infinitely many large Brownian loops in the loop soup do almost surely intersect the unit circle), it is easy to see that there exists almost surely only one dense cluster of loops. The Brownian loop soup does therefore not seem so well-adapted to define a full-plane
structure. 

The following observations will however be useful: Firstly, when $D$ is simply connected, define for each Brownian loop $\beta_j$ for $j \in J_D$, its outer boudary $\eta_j$ (the outer boundary of a Brownian loop is almost surely a simple loop, see \cite {W} and the references therein). Then, consider the outer boundaries of outermost clusters of loops defined by the family of simple loops $\eta_D= \{\eta_j, j \in J_D\}$ (instead of $\beta_D$). Clearly, this defines the very same collection of non-nested simple loops as the outer boundaries of outermost clusters of $\beta_D$, and it is therefore a CLE. 
Secondly, it is shown in \cite {W} that the family $\eta = \{\eta_j, j \in J\}$ is a Poisson point process of SLE$_{8/3}$ loops, and that this random family is invariant (in law) under any M\"obius transformation of the Riemann sphere (in particular under $z \mapsto 1/z$). This yields a non-trivial ``inside-outside'' symmetry of Brownian loop boundaries (the proof in \cite {W} is based on the fact that this outer boundary can be described in SLE$_{8/3}$ terms). 
Hence, we see that the CLE$_\kappa$ is also the collection of outer boundaries of outermost cluster of loops of an SLE$_{8/3}$ loop soup in $D$ (that can itself be viewed as the restriction of a full-plane SLE$_{8/3}$ loop soup to those loops that stay in $D$).

Finally, we note that the outer boundary $\eta_j$ of the Brownian loop $\beta_j$ is clearly much ``sparser'' than $\beta_j$ itself (the inside is empty...). As indicated in \cite {W}, it turns out that if one considers the soup $\eta$ of SLE$_{8/3}$ loops in the entire plane, then (for $c \le 1$) the clusters will almost surely all be finite and disjoint. Here is a brief justification of this fact: 

\begin {itemize}
 \item Note first that if we restrict ourselves to a (subcritical i.e., $c \le 1$) loop soup $\eta_\U$ in the unit disc $\U$, then the outer boundaries of outermost loop-soup clusters do form a CLE$_\kappa$. Therefore, the outermost cluster-boundary $\gamma$ (in the CLE in $\U$) that surrounds the origin is almost surely at positive distance of the unit circle. Hence, for some positive $\eps$, 
$$ \PR ( d ( \gamma , \partial \U )  > \eps ) \ge 1/2 .$$
Let us call $A_1$ this event $\{d ( \gamma , \partial \U ) > \eps\}$.
\item The total mass (for the SLE$_{8/3}$ loop measure defined in \cite {W}) of the set of loops that intersect both $\partial U$ and $(1-\eps) \partial \U$ is finite. This can be derived in various ways. One simple justification uses the description of this measure as outer boundaries of (scaling limits) of percolation clusters (see \cite {W}), and the fact that the expected number of critical percolation clusters that intersect both $R \partial \U$ and $(1-\eps)R \partial \U$ is finite and bounded independently of $R$ (this is just the Russo-Seymour-Welsh estimate) and therefore also in the $R \to \infty$ limit. Alternatively, one can do a simple SLE$_{8/3}$ computation. 
Hence, if we perform a full-plane SLE$_{8/3}$ loop soup, then with positive probability, no loop in the  soup with intersect both 
$\partial U$ and $(1-\eps) \partial U$. Let us call $A_2$ this event. 
\item The events $A_1$ and $A_2$ are independent ($A_1$ is measurable with respect to the set of loops in the loop soup that stay in $\U$ and $A_2$ is measurable with respect to the set of loops in the loop soup that intersect $\partial \U$). Hence, the probability that $A_1$ and $A_2$  hold simultaneously is strictly positive. This implies that with positive probability, there exists a cluster of loops in the full-plane SLE$_{8/3}$ loop soup, that surrounds the origin and is contained entirely in the unit disc.
\item It follows immediately (via a simple $0-1$ law argument, because the event that there exists an unbounded loop-soup cluster does not depend on the set of loops that are contained in $R \U$ for any $R$, and is therefore also independent of the loop soup itself) that almost surely, all clusters in this soup are bounded (if not, the distance between the origin and the closest infinite cluster is scale-invariant and positive).
\item The fact that the clusters are almost surely all disjoint can be derived in a rather similar way (just notice that if two different full-plane loop-clusters had a positive probability to be at zero distance from each other, then the same would be true for two CLE loops in the unit disc, with positive probability, and we know that this is almost surely not the case.
\end {itemize}

To sum things up: For any given $c \le 1$, the full-plane SLE$_{8/3}$ loop soup defines a random collection of clusters $(K_i, i \in I)$ that is {\bf invariant in distribution under any 
M\"obius transformation of the Riemann sphere} (including the inversion $z \mapsto 1/z$), and the boundaries (inner and outer boundaries) of these clusters 
are closely related to SLE$_\kappa$ paths for $\kappa = \kappa (c) \in (8/3, 4]$.

\subsection{Markov chains of nested clusters and of nested loops}

We are now going to pick two points on the Riemann sphere, namely the origin and infinity (but by conformal invariance, this choice is not restrictive), and we are going to focus only on those clusters 
that disconnect one from the other i.e. that surround the origin. Recall that almost surely, both the origin and infinity are not part of a cluster (and scale-invariance shows that there exist almost surely a countable family of
loop-soup clusters that disconnect $0$ from infinity). 
We can order those clusters that disconnects infinity from the origin ``from outside to inside''. We will denote this collection by $(K_j, j \in J)$ 
where $J \subset I$ is now a decreasing bijective image of $\Z$ (each $j$ in $J$ has therefore a successor denoted by $j+1$).

\begin{figure}[ht]
\begin{center}
\includegraphics[scale=0.5]{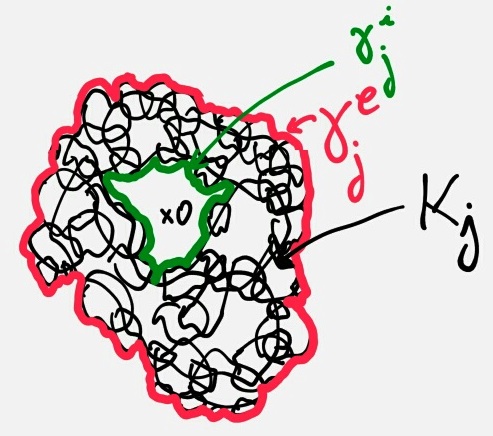}
\end{center}
\caption {A SLE$_{8/3}$ loop cluster, with its outer and inner boundary}
\end {figure}

The boundaries of the complement of each $K_j$ consists of countably many loops, two of which (corresponding to the connected components $O_j^i$ and $O_j^e$ of $\C \setminus K_j$ that respectively contain the origin and infinity) surround the origin. We will call these boundaries $\gamma_j^i$ and $\gamma_j^e$. One therefore has a nested discrete sequence of loops, when $j$ in $J$, then 
$$\gamma_{j}^e \succ \gamma_j^i \succ \gamma_{j+1}^e$$
where $\gamma \succ \gamma'$ means that $\gamma$ surrounds $\gamma'$ (we however allow here the possibility that $\gamma$ intersects $\gamma'$ -- indeed, for small $c$, it happens that 
for a positive fraction of the $j$'s, the inner and outer boundaries $\gamma_j^i$ and $\gamma_j^e$ of $K_j$ do intersect).  

The scale invariance of the loop soup, as well as the fact that the expected number of clusters that surround the origin and have diameter between $1$ and $2$ say is finite, shows immediately that we can
define three infinite measures $\nu$, $\nu^i$ and $\nu^e$ that correspond to the intensity measure of the families $(K_j)$, $(\gamma_j^i)$ and $(\gamma_j^e)$ respectively. 
In other words, for any measurable family $L$ of loops (see e.g. \cite {W} for details on the $\sigma$-field that one can use), 
$$ \nu^i (L) = \EX \bigg( \sum_{j \in J} \ind_{\gamma_j^i \in L} \bigg)$$
(and the analogous definition for the measure $\nu^e$ on outer loops and for the measure $\nu$ on clusters of loops, defined on an appropriately chosen $\sigma$-field).
Clearly, these measures are scale-invariant i.e. for any set $L$ of loops and any positive $\lambda$, 
$$\nu^i  (L) = \nu^i ( \{ \gamma \, : \, \lambda \gamma \in L \}).$$
Additionally, these measures have the following inversion relations which play an important role
later.

\begin{proposition}\label{prop: measures under z mapsto 1/z}
The measure $\nu$ is invariant under $z \mapsto 1/z$
and the image of the measure $\nu^i$ under $z \mapsto 1/z$ is $\nu^e$.
\end{proposition}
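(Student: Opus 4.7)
The plan is to read both statements off directly from the Möbius invariance of the full-plane SLE$_{8/3}$ loop soup recalled in the previous subsection (in particular its invariance under the inversion $\iota(z)=1/z$, established in \cite{W}). Since the soup is invariant in law under $\iota$, so is its (unordered) collection of clusters, viewed as a point process of compact sets in $\hat{\C}$.

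First I would treat $\nu$. A cluster $K$ disconnects $0$ from $\infty$ if and only if $\iota(K)$ disconnects $\iota(\infty)=0$ from $\iota(0)=\infty$, so the sub-collection of clusters of the image soup $\iota(\eta)$ that surround the origin is exactly $(\iota(K_j))_{j\in J}$. Equating the intensity measures of the two equidistributed point processes $(K_j)_{j\in J}$ and $(\iota(K_j))_{j\in J}$ yields $\iota_*\nu=\nu$. (The definition of $\nu$ via an expectation over a countable sum makes this identification immediate.)

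For the second assertion the key observation is that $\iota$ interchanges the two distinguished complementary components of each such cluster. Indeed, $\iota(O_j^i)$ contains $\iota(0)=\infty$ and $\iota(O_j^e)$ contains $\iota(\infty)=0$, so in the picture transported by $\iota$ the loop $\iota(\gamma_j^e)$ plays the role of the inner boundary and $\iota(\gamma_j^i)$ plays the role of the outer boundary of the cluster $\iota(K_j)$. Combining this bookkeeping with the distributional $\iota$-invariance of the soup, the intensity measure of inner loops of clusters of $\eta$ surrounding $0$ (which is $\nu^i$ by definition) equals the intensity measure of the loops $\iota(\gamma_j^e)$, which is $\iota_*\nu^e$. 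Hence $\nu^i=\iota_*\nu^e$, as claimed.

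There is essentially no obstacle beyond careful bookkeeping: one must check that ``cluster that separates $0$ from $\infty$'' and the partition of $\hat{\C}\setminus K_j$ into the component containing $0$ and the component containing $\infty$ are each transported in the expected way by any homeomorphism of $\hat{\C}$ that swaps $0$ and $\infty$, and that the measurability setup (the $\sigma$-field on loops/clusters from \cite{W}) is compatible with pushing forward by $\iota$. Once that is in place, both statements reduce to the Möbius invariance already available, with no further SLE- or soup-specific input required.
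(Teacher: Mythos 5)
Your argument is correct and is exactly the paper's proof, which simply cites the inversion invariance of the full-plane SLE$_{8/3}$ loop soup; your write-up just makes the bookkeeping explicit (inversion preserves the family of clusters separating $0$ from $\infty$ and swaps the roles of the complementary components containing $0$ and $\infty$, hence swaps inner and outer boundary loops). No gaps.
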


\begin{proof}
The claim follows from the fact that the full-plane SLE$_{8/3}$ 
loop soup is invariant under inversion.
\end{proof}


\medbreak

Let us define three Markov kernels that are heuristically correspond to 
the mapping $K_{j} \mapsto K_{j+1}$, $\gamma^i_{j} \mapsto \gamma^i_{j+1}$ 
and $\gamma_{j}^i \mapsto \gamma_{j+1}^e$.
Note that in the definition of these chains, we always explore from outside 
to inside and from one cluster to the next one.

More rigorously, for any simply connected domain $D \not= \C$ that contains the origin, sample an SLE$_{8/3}$ loop soup in $D$ and denote by ${\cal L}^{K}_D$ (respectively ${\cal L}^{i}_D$ and ${\cal L}^e_D$) the law of the outermost cluster that surrounds the boundary (resp. the inner boundary of this outermost cluster and the outer boundary of the outermost cluster).
When $A$ is a compact set that surrounds the origin, denote (whenever it exists) by $D(A)$ the connected component of the complement of $K$ that contains the origin. 
Then, the kernels are defined as follows: Consider  $Q^{\to K} (A,  \cdot) := {\cal L}^K_{D(A)} ( \cdot)$ and similarly 
$Q^{\to i} (A, \cdot) := {\cal L}^i_{D(A)} (\cdot)$ and 
 $Q^{\to e} (A, \cdot) := {\cal L}^e_{D(A)}(\cdot)$.

\begin{figure}[ht]
\begin{center}
\includegraphics[scale=0.4]{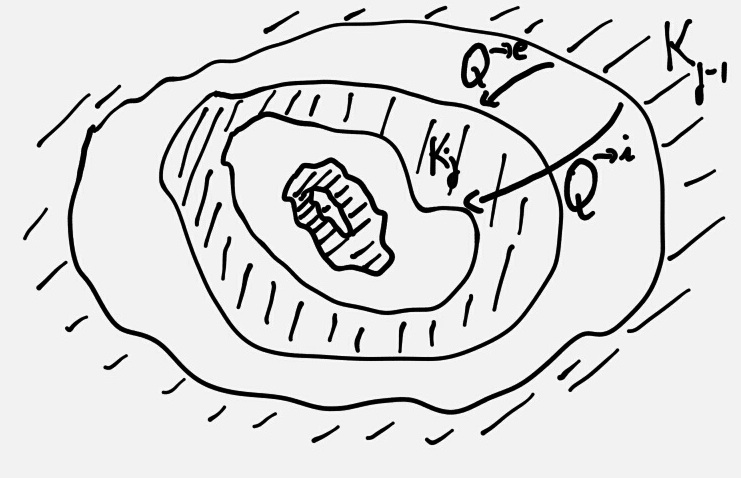}
\end{center}
\end {figure}

Let us now consider a full-plane SLE$_{8/3}$ loop soup, and take two different simply connected domains $D$ and $D'$ that contain the origin. For $D$ (respectively $D'$), we restrict
the full-plane loop soup to the set of loops that are contained in $D$ (resp. $D'$). Hence, we have now three families of nested clusters that surround the origin:
\begin {itemize}
 \item The clusters $(K_j, j \in J)$ of the full-plane loop soup.
 \item The clusters $(K_n^D, n \ge 1)$ and $(K_{n'}^{D'}, n' \ge 1)$ corresponding to the loop soups in $D$ and $D'$ respectively. These two sequences can ve viewed as Markov chains with kernel $Q^{ \to K}$ started from $\partial D$ and $\partial D'$ respectively. Similarly, their inner boundaries are Markov chains with kernel $Q^{\to i}$.
\end {itemize}
The properties of the full-plane SLE$_{8/3}$ loop soup imply immediately that almost surely, there exists $j_0$, $n_0$ and $n_0'$ so that for all $n \ge 0$, 
\begin {equation}
K_{j_0 + n } = K_{n_0 +n }^D = K_{n_0' + n }^{D'}.
\label {coupl}
\end {equation}
Indeed, almost surely, for small enough $\epsilon$, no loop in the loop soup does intersect both the circle of radius $\epsilon$ around the origin and $\C \setminus D$ or $\C \setminus D'$, which implies that the ``very small'' clusters that surround the origin are the same in all three pictures. 


\subsection{Shapes of clusters and of loops}

Note that one can  decompose the information provided by a loop $\gamma$ (or a set $K$) that surrounds the origin into two parts (we now detail this in the case of the loop): 
\begin {itemize}
 \item Its ``size'', for instance via the log-conformal radius $\rho (\gamma)$ of its interior, seen from the origin (such that the Riemann mapping $\Phi_\gamma$ from the unit disc onto the interior of $\gamma$ such that $\Phi_\gamma(0) = 0$ and $\Phi_\gamma' (0) \in (0, \infty)$ satisfies $\Phi_\gamma' (0) = \exp ( \rho (\gamma))$.
 \item Its ``shape'' $S(\gamma)$ i.e. its equivalence class under the equivalence relation $$\gamma \sim \gamma'
 \Longleftrightarrow \hbox {There exists some positive $\lambda$ for which $\gamma = \lambda \gamma'$}.$$
\end {itemize}
For a shape $S$ and a value $\rho$, we define $\gamma ( \rho , S)$ to be the only loop with shape $S$ and log-conformal radius $\rho$.
The scale-invariance of $\nu^i$ implies immediately that there exists a constant  $a_i$ and a probability measure $P^i$ on the set of shapes so that $\nu^i$ is the image of the product measure 
$a_i d\rho \otimes P^i$ under the mapping $(\rho, S) \mapsto \gamma (\rho, S)$.

The same of course holds for $\nu^e$, which defines a constant $a_e$ and a probability measure $P^e$, and for $\nu$ that defines a constant $a_K$ and a probability measure $P^K$.
We can note that  for any $R$, for a full-plane SLE$_{8/3}$ loop-soup sample, the number of inside cluster boundaries and the number of exterior cluster boundaries that are included in the annulus between the circles of radius $1$ and $R$ can differ only by at most $1$ from the number of interior cluster boundaries in this annulus 
(because the loops $\gamma_j^i$ and $\gamma_j^e$ are alternatively nested). It follows (letting $R \to \infty$ and looking at the expected number of such respective loops) that $a_e=a_i=a_K$ (and we will denote this constant by $a$).

\medbreak

We can note that the three kernels $Q^{\to i}$, $Q^{\to K}$, $Q^{\to e}$ induce  kernels $\tilde Q^{ \to i}$, $\tilde Q^{\to K}$ and $\tilde Q^{ \to e}$ on the set of shapes 
(because the former kernels are ``scale-invariant'').
The coupling property (\ref {coupl}) implies immediately that $P^K$ and $P^i$ are the unique stationary distributions for $\tilde Q^{\to i}$ and $\tilde Q^{\to K}$. 
It follows that $\nu^i$ and $\nu$ are (up to a multiplicative constant) the only scale-invariant measures that are invariant under $Q^{\to i}$ and $Q^{\to K}$ respectively.

\section{Full-plane CLE and reversibility}

In the next two subsections, we will describe the construction of the full-plane CLEs. This has been independently and in parallel written up also in \cite {MWW}, where it is used for another purpose.

\subsection{Markov chain of nested CLE loops and its properties}

In order to construct and study the nested CLEs, we will focus on the kernel $Q^{\to e }$ instead of $Q^{\to i}$. 
Let us first collect some preliminary simple facts:

\begin {enumerate}
 \item 
Let us consider first a loop soup in the unit disc. We know a priori that the log-conformal radius of $\gamma^e_{1}$ is not likely to be very small: For instance, 
 for any positive $x_0$, there exists $c >0$ so that for all $x$,
 \begin {equation}
  \label {lem1}
 \PR \left( \rho (\gamma^e_1) \le -x-x_0 \right) \le e^{-cx_0} \, \PR \left( \rho ( \gamma_1^e) \le -x \right).
 \end {equation}
 Indeed, if $\rho (\gamma^e_1) \le -x-x_0$, then on the one hand, the annulus $\{ z \, : \, e^{-x_0} < |z| < 1 \}$ does not contain an SLE$_{8/3}$ loop in the loop soup (which is an event of probability strictly smaller than one), and on the other 
 hand, if we restrict the loop soup to the disc $e^{-x_0} \U$, the outermost loop-soup cluster boundary $\tilde \gamma_1^e$ that surrounds the origin satisfies 
 $ \rho ( \tilde \gamma_1^e ) \le \rho (\gamma_1^e) \le -x-x_0$. But these two events are independent, and the laws of 
  $ \rho ( \tilde \gamma_1^e ) + x_0$ and of  $\rho (\gamma_1^e)$ are identical by scale-invariance, so that (\ref {lem1}) follows.
\item
Consider a sequence $(\xi_n, n \ge 1)$ of i.i.d. positive random variables such that for some $x_0$ and $c >0$, and for all $x$,
$\PR ( \xi_1 > x + x_0 ) \le e^{-cx_0} \, \PR ( \xi_1 > x)$. Define 
$S_n = \xi_1+ \ldots + \xi _n$ and for all $y >0$, the overshoot at level $y$ i.e. $O(y) = \min \{ S_n - y \, : \, n \ge 1 \hbox { and } S_n > y \}$. 
Then, for all $M$ that is a multiple of $x_0$,
\begin {equation} 
\label {lem2}
\PR ( O(y) \ge M ) \le e^{-cM}.
\end {equation}
Indeed, if we suppose that $M$ is a multiple of $x_0$, then
\begin {align*}
 \PR ( O( y) \ge M ) 
 & =  \sum_{n \ge 0} \PR ( S_n < y \hbox { and } \xi_{n+1} \ge M +y  - S_n ) \\ 
 & =  \sum_{n \ge 0} \EX \left[ \,
        \ind_{S_n < y} \, \PR ( \xi_{n+1} \ge M +y  - S_n \,|\, \sigma(\xi_1, \ldots, \xi_n)  ) \,\right] \\ 
 & \leq  \sum_{n \ge 0}  e^{-cM} \, \EX \left[ \,
        \ind_{S_n < y} \, \PR ( \xi_{n+1} \ge y  - S_n \,|\, \sigma(\xi_1, \ldots, \xi_n)  ) \,\right] \\
 & =  e^{-cM} \sum_{n \ge 0} \PR ( S_n < y \le S_{n+1})  = e^{-cM}
 \end {align*}
where $\sigma(\xi_1, \ldots, \xi_n) $ is the sigma algebra generated by the random variables
$\xi_k$, $k = 1,2,\ldots, n$.
Note that (\ref {lem2}) shows that for a large enough given $M$, $P( O(y) \le M ) \ge 1/2$ (independently of $y$).
\item
Let us briefly recall how to define a nested CLE in the simply connected domain $D$ (with $D \not= \C$). We first sample a simple CLE, that defines a countable collection of disjoint and non-nested loops in $D$. 
For each $z \in D$, it is almost surely surrounded by a loop denoted by $\gamma_1(z)$ in this CLE (note of course that while for each given $z$, $ \gamma_1 (z)$ almost surely exists, there  exists a random fractal set with zero Lebesgue measure of points  that are surrounded by no loop). In particular, if the origin is in the domain $D$, 
then the loop $\gamma_1 (0)$ is distributed 
like the loop $\gamma_1^e$.

Then, once this first-layer CLE is defined, we repeat (conditionally on this first generation of loops) the same experiment independently inside each of these countable many loops. For each given $z$, this defines almost surely 
a second-layer loop $\gamma_2 (z)$ that surrounds $z$. We then repeat this procedure indefinitely. Hence, for any fixed $z$, we get almost surely 
a sequence of nested loops $(\gamma_n (z), n \ge 1)$. 

Let us now suppose that $0 \in D$. Clearly, if we focus only the loops that surround the origin $(\gamma_1, \gamma_2, \ldots ) := (\gamma_1 (0), \gamma_2 (0), \ldots)$, we get a Markov chain with kernel $Q^{\to e}$. We can now define the random variables 
$\xi_1  = \rho (D) - \rho (\gamma_1)$ and for all $j \ge 2$, $\xi_j = \rho (\gamma_{j-1}) - \rho(\gamma_j)$
corresponding to the successive jumps of the log-conformal radii. These are i.i.d. positive random variables, and combining the previous two items, we see that there exists $M$ and $c$ such that, for all $v < \rho (D)$, if $j_0$ is the first $j$ for which $\rho ( \gamma_j) < v$, then 
\begin {equation}
 \PR ( \rho (\gamma_{j_0}) \le v - M ) \le e^{-cM}.
 \label {lem3prime}
 \end {equation}
In particular, for some given $M$, 
\begin {equation}
\PR ( \rho (\gamma_{j_0}) \le v - M ) \le 1/2.
\label {lem3}
\end {equation}

\item
Let us now consider two bounded simply connected domains $D$ and $D'$ that surround the origin, and try to couple the (non-nested) CLEs in these two domains in such a way that 
the {\em first} loops $\gamma_1$ and $\gamma_1'$ that surround the origin coincide. We will assume in this paragraph that the log-conformal radii of $D$ and $D'$ are not too different i.e. that $$ | \rho (D) - \rho (D') | \le M$$
(where $M$ is chosen as in (\ref {lem3})).

We consider a realization of the SLE$_{8/3}$ loop soup in the full-plane, and then restrict them to $D$ and $D'$ respectively. 
This defines a coupling of the two loops $\gamma_1^e$ and ${\gamma_1^e}'$. Then, for this coupling, 
there exists a positive constant $u$ that does not depend on $D$ and $D'$ so that
\begin {equation}
\PR ( \gamma_1^e = {\gamma_1^e}' ) > u 
\label {lem4}
\end {equation}
Let us now briefly indicate how to 
prove this fact: Clearly, we can assume that $\rho (D') \ge \rho (D)$ (otherwise, just swap the role of $D$ and $D'$), and  
because of scale-invariance, we can assume, without loss of generality that $\rho (D) = 0$ (i.e. that there exists a conformal map $\Phi$ from $D$ onto $\U$ such that $\Phi (0)=0$ and $\Phi'(0)=e^0 = 1$). By Koebe's $1/4$ Theorem, this implies that $\U \setminus 4 \subset D$ and similarly (because $\rho (D') \ge 0$), that $\U / 4 \subset D'$. 

Let us consider a full-plane loop soup of SLE$_{8/3}$ loops.  
Let us first restrict this loop soup to the disc $\U /8$, and define the event that there exists an outer-boundary of a cluster 
 in this loop soup such that its outer boundary  does fully surround $\U/16$ and such that the cluster itself does intersect $\U /16$. 
If such a cluster exists, then it is clearly unique -- we denote it by $K$. Note that at this point,  
we have not required that no other cluster of the loop soup in $\U/8$ surrounds $K$.

Considerations from \cite {ShW} show that such a $K$ indeed exists with a positive probability $u_0$. Furthermore, we can discover this event ``from the inside'' by exploring all loop-clusters of the loop soup that do intersect the disc $\U / 16$. Hence, for any simply connected domain $V$, the event that $K\subset V$ is independent 
from the loops in the full-plane loop soup that do not intersect $V$. It therefore follows easily that (on the event where $K$ exists) the conditional law of the loop soup outside of $K$ (given $K$) is just a SLE$_{8/3}$ loop soup restricted to the outer complement of $K$.

On the other hand, we also know (for instance from \cite {ShW}) that with positive probability (that is bounded from
below independently of the shape of $D$), the outermost cluster $K_1$ in the CLE in $D$ is a subset of $\U /16$. It therefore follows that, conditionally on $K$, if we then sample the loops in $D$ that lie outside of $K$, with a conditional probability that is bounded uniformly away from $0$ (i.e. uniformly larger than some $u_1$), we do not create another cluster of loops that surrounds $K$. Hence, the conditional probability that $K_1 = K$ is greater than $u_0u_1$. 

The same holds for $K_1'$ (using this time the fact that $0 \le \rho (D') \le M_0$), and (when one first conditions on $K$), the events $K_1 = K$ and $K_1' = K$ are positively correlated (they are both decreasing events of the loop soup outside of $K$). 
Hence, we conclude that the conditional probability that $K = K_1 = K_1'$ is bounded away from $0$ uniformly, from which (\ref {lem4}) follows. 

\end {enumerate}

With these results in hand, we can now construct a coupling between nested 
CLEs between any two given  simply connected domains $D$ and $D'$ that surround the origin, in 
such a way that they coincide in the neighborhood of the origin: 

We will first only focus on the two sequences of loops that surround the origin $(\gamma_1, \gamma_2, \ldots)$ and $(\gamma_1', \gamma_2', \ldots)$ that we will construct from the outside to the inside in a ``Markovian way'', and we will couple them in such a way that for some $n_0$ and $n_0'$, $\gamma_{n_0} = \gamma_{n_0'}'$. Then, we will choose 
the two nested CLEs in such a way that they coincide within this loop $\gamma_{n_0}$.

Suppose for instance that $\rho (D) \ge \rho (D')$ (the other case is treated symmetrically) -- note however that we do not assume here that $\rho (D)$ and $\rho (D')$ are close. 
So, our first step is to try to discover some loops $\gamma_m$ and $\gamma_{m'}'$ in the two nested CLEs that have a rather close log-conformal radius. 

We therefore first construct $\gamma_2, \gamma_3, \ldots$ (using the Markov chain $Q^{\to e}$) until 
$\gamma_{m_1}$, where 
$$ m_1 = \min \{ m \ge 1  \, : \, \rho (\gamma_m) < \rho (D')\} .$$ Two cases arise:
\begin {itemize}
 \item Case~1: $\rho (\gamma_{m_1}) \ge  \rho (D') - M$. 
By (\ref {lem3}), we know that this happens with probability at least $1/2$. In this case, these two sets have close enough conformal radii (so that we will then be able to couple  $\gamma_{m_1 + 1}$ with $\gamma_1'$ so  that they coincide with probability at least $u$) and we stop.
 \item Case~2: $\rho (\gamma_{m_1}) < \rho (D')  - M$. Then, we start constructing the loops $\gamma_1', \ldots$ until we find a loop $\gamma_{m_1'}'$
 such that $\rho (\gamma_{m_1'}') < \rho (\gamma_{m_1})$. Again, either the difference between the conformal radii is in fact smaller than $M$, and 
 we stop. Otherwise, we start exploring the loops $\gamma_{m_1+1}, \ldots$ until we find 
 $\gamma_{m_2}$ with $\rho (\gamma_{m_2}) < \rho (\gamma_{m_1'}')$, and so on. At each step, the probability that we stop is at least $1/2$, so that this procedure necessarily ends after a finite number of iterations. 
\end {itemize}
In this way, we almost surely find $\gamma_m$ and $\gamma_{m'}'$ so that $|\rho (\gamma_m) - \rho (\gamma_{m'}')| \le M$. Furthermore, we have not yet explored/constructed the loops inside these two loops. Hence, we can now use (\ref {lem4}) to couple $\gamma_{m+1}$ with $\gamma_{m'+1}'$ so that they are equal with probability at least $u$.

On the part of the probability space where the coupling did not succeed, we start the whole procedure again by continuing to construct loops inwards from these two loops 
$\gamma_{m+1}$ and $\gamma_{m'+1}'$. Again, since this coupling succeeds at each iteration with a probability at least $u$, we finally conclude that almost surely, using this construction, we will eventually find $\bar m$ and $\bar m'$ so that $\gamma_{\bar m} = \gamma_{{\bar m}'}'$.

A final observation is that (because of (\ref {lem3prime})), for this construction  
$$\PR \left( \rho (\gamma_{\bar m}) \,<\, \min (\rho (D), \rho (D')) - x \right) \to 0 $$
as $x \to \infty$, uniformly with respect to all choices of $D$ and $D'$. 

Hence, we have obtained the following result.

\begin {proposition}\label{prop: coupling of nested CLEs}
For any $D$ and $D'$, it is possible to couple the nested CLEs in $D$ and in $D'$ in such a way that almost surely:
\begin {itemize}
 \item There almost surely exists an $n_0$ such that the two nested CLEs 
coincide inside the loop $\gamma_{n_0}$.
\item Furthermore, $ \PR ( n_0 \ge j )$ tends to zero as $j$ tends to infinity,  uniformly over all possible sets $D$ and $D'$ with $\rho (D') \ge \rho (D)$.
\end {itemize}
\end {proposition}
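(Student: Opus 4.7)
The plan is to formalize the leapfrog coupling procedure sketched in the paragraphs preceding the statement. I work with the two Markov chains $(\gamma_n)_{n\geq 1}$ and $(\gamma_n')_{n\geq 1}$ of nested CLE loops that surround the origin in $D$ and $D'$ respectively, both driven by the kernel $Q^{\to e}$, and aim to produce a pair $(\bar m, \bar m')$ with $\gamma_{\bar m}=\gamma_{\bar m'}'$, then extend the equality to all loops strictly inside.

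\textbf{Phase 1 (radius matching).} Without loss of generality assume $\rho(D')\ge \rho(D)$ at the start of a round. Grow the $\gamma'$-chain inward until the first index $m_1'$ with $\rho(\gamma_{m_1'}')<\rho(D)$. By the overshoot estimate~(\ref{lem3}), with probability at least $1/2$ one already has $\rho(\gamma_{m_1'}')\ge \rho(D)-M$, in which case the two ``current'' loops have log-conformal radii within $M$ and we pass to Phase~2. Otherwise swap roles and grow the $\gamma$-chain past $\rho(\gamma_{m_1'}')$, then swap back, and so on. Each swap succeeds (i.e.\ brings the radii within $M$) with conditional probability at least $1/2$, so after a geometrically-bounded number of swaps we reach a pair $(\gamma_m,\gamma_{m'}')$ with $|\rho(\gamma_m)-\rho(\gamma_{m'}')|\le M$.

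\textbf{Phase 2 (coupling attempt).} Given such a matched pair, realize the next step of each chain from a single full-plane SLE$_{8/3}$ loop soup as in (\ref{lem4}): with conditional probability at least $u>0$ the two next loops $\gamma_{m+1}$ and $\gamma_{m'+1}'$ coincide. If they do, stop and set $(\bar m,\bar m')=(m+1,m'+1)$. Otherwise reset and re-enter Phase~1 with the new pair of ``current'' loops, which by the Markovian construction of each chain via $Q^{\to e}$ places us in exactly the same situation as before, only deeper inside. Since each coupling attempt succeeds independently with probability at least $u$, the total number of attempts $N$ is stochastically dominated by a Geometric$(u)$ variable, so $(\bar m,\bar m')$ is almost surely finite.

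\textbf{Extension inside and tail bound.} Once $\gamma_{\bar m}=\gamma_{\bar m'}'$, use the i.i.d.\ nesting structure of nested CLE: conditionally on this common loop, the two nested CLEs inside are independent nested CLEs in the same simply connected domain, and can trivially be coupled to be identical. Set $n_0=\bar m$ and $n_0'=\bar m'$. For the uniform tail on $n_0$, combine (i) the geometric tail on the number of rounds $N$, and (ii) the overshoot estimate~(\ref{lem3prime}) applied to each individual $Q^{\to e}$-step of either chain, which gives an exponential tail for the total descent in log-conformal radius during any single round. These two facts together bound the total number of $Q^{\to e}$-steps taken until coupling succeeds, with a tail that is uniform over pairs $(D,D')$ satisfying $\rho(D')\ge\rho(D)$, yielding $\PR(n_0\ge j)\to 0$ as $j\to\infty$ uniformly.

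The one point requiring care, which I expect to be the main bookkeeping obstacle, is to verify at each reset that conditionally on everything explored so far, the two configurations of unexplored loops inside the current pair still have the correct joint distribution so that (\ref{lem4}) can be re-applied with the same constant $u$. This follows because at each stage the exploration reveals only the outer portion of each loop soup (loops outside the current $\gamma_m$ or $\gamma_{m'}'$), leaving independent loop soups in the interior simply connected domains; a clean filtration-based statement of this Markov property is what turns the informal iteration into a rigorous proof.
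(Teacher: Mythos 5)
Your proposal is correct and follows essentially the same route as the paper: the leapfrog radius-matching via the overshoot estimate (\ref{lem3}), repeated coupling attempts via (\ref{lem4}) realized from a common full-plane SLE$_{8/3}$ loop soup with geometrically many rounds, extension inside the matched loop by the i.i.d.\ nesting structure, and the uniform tail via (\ref{lem3prime}). The ``point requiring care'' you flag (re-applicability of (\ref{lem4}) after each reset because only the region outside the current loops has been explored) is exactly the fact the paper also uses, stated there equally informally.
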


\subsection{Definition of the full-plane CLE}

Proposition~\ref{prop: coupling of nested CLEs} enables us to define and state a few properties of the full-plane CLE.

\begin {itemize}
 \item The law of the part of the nested loop soup in the disc $n \U$ that is contained in a finite ball of radius $r>0$ does converge when $n \to \infty$ to a limit.
 \item For any sequence of domains $D_n$ with $\rho (D_n) \to \infty$, the law of the part of the nested loop soup in the disc $n \U$ that is contained in a finite ball of radius $r>0$ does converge when $n \to \infty$ to the same limit.
\end{itemize}

The first statement is just obtained by noting that the previous proposition shows that it is possible to couple the nested CLE in $n \U$ with the nested CLE in $n'\U$, so 
that with a large probability (that tends to $1$ when $n, n' \to \infty$) they coincide inside the disc of radius $r$. 
The second just follows from the coupling between the CLEs in $D_n$ and $n \U$.

The above convergence enables to define the \emph {full-plane nested CLE} (started from $\infty$) to be the law on nested loops 
in the entire plane that coincide with the limit inside each disc of radius $r$. 
Let $b \in \C$.
We define the \emph{full-plane nested CLE chain to $b$} (from $\infty$) to 
the restriction of  the full-plane nested CLE to those loops that surround $b$.
We use the notation CLE$(a \to \C)$ and CLE$(a \to b)$ for the full-plane nested CLE started from $a$ 
and the corresponding chain to $b$, which
are defined for other $a$ than $\infty$ by a M\"obius transformation.

For this last definition, we used the fact that 
the full-plane CLE is scale-invariant and translation invariant in distribution in the following
sense:
\begin{itemize}
\item If $\psi_\lambda$ is the scaling in the plane by a factor $\lambda>0$, then the law of CLE$(\infty \to 0)$
is invariant under $\psi_\lambda$. Also CLE$(\infty \to \C)$ is invariant under $\psi_\lambda$. 
\item If $\phi_b$ is the translation in the plane by a complex number $b$, then
the law of CLE$(\infty \to b)$ is the image of of the law CLE$(\infty \to 0)$ under $\phi_b$.
The entire collection CLE$(\infty \to \C)$ is in fact invariant under $\phi_b$.
\end{itemize}
This property follows from coupling the CLEs in $n\U$ with $n \lambda \U + b$ (for a given 
$\lambda>0$ and $b \in \C$).

In the nested CLE in a domain $D$, the chains of loops to distinct points $\{b_1,b_2,\ldots,b_n\}$
are coupled so that the chains to $b_i$ and $b_j$ are the same until the loops of $b_i$ don't any more
surround $b_j$ and vice versa, after which the chains are conditionally independent.
This shows that the full-plane CLEs from $\infty$ to any of the points $\{b_1,b_2,\ldots,b_n\}$
can be coupled to have the same property.
Let us denote the restriction of CLE$(a \to \C)$ to those loops that disconnect $a$ and a point in $\{b_1,b_2,\ldots,b_n\}$
by CLE$(a \to \{b_1,b_2,\ldots,b_n\})$.
In the rest of the paper,
we will show that the law of CLE$(\infty \to \C)$ is fully M\"obius invariant
and by that result we can define the \emph{Riemann sphere nested CLE}, denoted by CLE$(\hat{\C})$,
whose law doesn't depend on the starting point.
One way to formulate this is that CLE$(a \to \{b_1,b_2,\ldots,b_n\})$ 
and  CLE$(b_1 \to \{a,b_2,\ldots,b_n\})$ have the same law, if we ignore the order of exploration of the loops.

\medbreak

We can  define $\nucle$ as the infinite intensity measure of CLE$(\infty \to 0)$, i.e.
the set of loops that surround the origin in the full-plane CLE,
and apply the same arguments that at the end of Section 2: The measure $\nucle$ is  scale-invariant,
invariant under $Q^{\to e}$, and its shape probability measure $P^{\rm cle}$ is invariant under $\tilde Q^{\to e}$. The previous coupling result shows that $P^{\rm cle}$ is the unique 
invariant shape distribution under $\tilde Q^{\to e}$, from which one can deduce that (up to a multiplicative constant) $\nucle$ is the unique scale-invariant  measure that is invariant under $Q^{\to e}$.

\subsection {Roadmap to reversibility of the full-plane CLE}

Let us now briefly sum up the measures on translation-invariant and scale-invariant random full-plane structures that we have defined at this point: 
\begin {itemize}
 \item[(i)] The nested CLE in the entire plane. When one focuses at the loops surrounding the origin, it has an intensity measure $\nucle$ that is, up to multiplicative constants, 
 the only scale-invariant measure that is invariant under $Q^{ \to e}$.
 \item[(ii)] The full-plane SLE$_{8/3}$ loop soup. 
  When looking at clusters and their boundaries that surround the origin, it defines 
 intensity measures $\nu$, $\nu^i$ and $\nu^e$. The former two are (up to multiplicative constants) the only invariant measures under $Q^{\to K}$ and $Q^{\to i}$ that are scale-invariant. 
 Furthermore $Q^{\to e} \nu^i = \nu^{e}$.
 \end {itemize}


We recall that (as opposed to the nested CLE) 
we already know at this stage that the full-plane SLE$_{8/3}$ 
loop soup is invariant under inversion
and that therefore the image of $\nu^i$ under $z \mapsto 1/z$ is $\nu^e$,
see Proposition~\ref{prop: measures under z mapsto 1/z}.

%
%


%

Our roadmap is now the following: In the next section, we are going to prove (building on the SLE$_\kappa$ description of CLEs and on various 
properties of SLE) the following fact:

\begin {proposition}
\label {nui=nue}
The two measures $\nu^e$ and $\nu^i$ are equal.
\end {proposition}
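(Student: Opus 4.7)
Since $\nu^i$ and $\nu^e$ are both scale-invariant with the same rate $a_e = a_i = a$ (by the alternation argument recalled just before this proposition), proving $\nu^i = \nu^e$ reduces to showing the equality of the shape distributions $P^i = P^e$.

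The plan is to give an SLE$_\kappa$ description of each of $\gamma_j^e$ and $\gamma_j^i$ inside the enclosing domain, and then match these descriptions using the reversibility of SLE$_\kappa$ for $\kappa \in (0,4]$. The outer loop $\gamma_j^e$ is a CLE$_\kappa$ loop, so by the Sheffield exploration of \cite{Sh} it can be identified with the first loop swallowing the origin produced by an SLE$_\kappa(\kappa-6)$ process targeted at $0$ and started from a boundary point of $D(\gamma_{j-1}^i)$. To describe $\gamma_j^i$ in analogous SLE terms, I would explore the cluster $K_j$ from its inside: using the spatial Markov property of the loop soup together with the SLE$_\kappa$ characterization of the outer cluster boundary, one can run a ``reversed'' variant of the SLE$_\kappa(\kappa-6)$ exploration inside $D(\gamma_j^e)$ targeted at $0$; SLE$_\kappa$ reversibility for $\kappa \le 4$ then identifies the loop traced by this reverse exploration as $\gamma_j^i$ (up to modification of the force-point data).

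With both boundaries presented in the same SLE-based framework, the shape distributions $P^e$ and $P^i$ will both be characterized as stationary probability measures of the scale-invariant shape kernel $\tilde Q^{\to e}$. Combined with the uniqueness of the invariant shape distribution for $\tilde Q^{\to e}$ (namely $P^{\rm cle}$, as established at the end of Section 3.2), this gives $P^i = P^e = P^{\rm cle}$, which together with $a_i = a_e$ yields $\nu^i = \nu^e$.

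The hardest step will be the SLE identification of the inner boundary $\gamma_j^i$. The loop $\gamma_j^e$ is a CLE loop by construction and so falls directly under the Sheffield--Werner machinery, but $\gamma_j^i$ arises only as the boundary of one of the (possibly countably many) complementary components of the cluster $K_j$. Identifying $\gamma_j^i$ with an SLE$_\kappa$-type loop and computing its driving data requires a delicate use of SLE sample-path properties. Two sources of technical difficulty will have to be handled: the possibility that $\gamma_j^i$ touches $\gamma_j^e$ (which occurs for small $c$ and forces one to allow boundary contact in the SLE description), and the need to single out the component of $\C \setminus K_j$ containing the origin among the many inner complementary components of $K_j$.
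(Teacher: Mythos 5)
Your reduction to shapes is fine ($a_i=a_e$ plus equality of shape laws would indeed give $\nu^i=\nu^e$), and so is the final appeal to uniqueness of the $\tilde Q^{\to e}$-stationary shape distribution. But the middle step --- ``$P^e$ and $P^i$ will both be characterized as stationary probability measures of $\tilde Q^{\to e}$'' --- is exactly the content of the proposition, and your proposal does not supply an argument for it. A priori, $\nu^i$ is invariant under $Q^{\to i}$ and $\nu^e$ is only known to be the image $Q^{\to e}\nu^i$ of $\nu^i$; neither $\nu^e$ nor $\nu^i$ is known to be $Q^{\to e}$-invariant, and the statement that $Q^{\to e}$ and $Q^{\to i}$ share their scale-invariant measures is precisely the ``surprising'' corollary that the paper \emph{deduces from} $\nu^i=\nu^e$, not an input to it. So as written, your scheme is circular: granting the claimed stationarity is essentially granting the proposition. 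The proposed mechanism for it --- exploring the cluster $K_j$ ``from the inside'' by a reversed SLE$_\kappa(\kappa-6)$-type process and invoking chordal reversibility to identify the traced loop with $\gamma_j^i$ --- is not substantiated and faces a structural obstacle: conditionally on $\gamma_j^e$ being an outermost cluster boundary, the loops inside $D(\gamma_j^e)$ are \emph{not} a fresh loop soup (they are conditioned to hook up into the cluster producing $\gamma_j^e$), so there is no CLE-type Markovian exploration available from that side; moreover chordal reversibility of SLE$_\kappa$ alone does not upgrade to reversibility of the branching/exploration process, which is a much stronger statement. This inside--outside asymmetry is the whole difficulty of the paper.

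The paper's actual proof takes a different, local route that you would need to reproduce or replace: it represents both $\nu^e$ and $\nu^i$, restricted to loops passing near a marked point $z$, via a bubble-measure decomposition $\tilde\rho^{z,\eps}\otimes\mu$ coming from the CLE/cluster exploration, rewrites $\nu^e(F)$ and $\nu^i(F)$ as in (\ref{nue})--(\ref{nui}) with weights $1/|V^e_\eps(\gamma)|$ and $1/|V^i_\eps(\gamma)|$, and then shows the two right-hand sides merge as $\eps\to0$. This uses (i) inversion invariance of the full-plane SLE$_{8/3}$ loop soup to turn the interior picture near $z$ into an exterior-type one, (ii) a coupling of the two SLE bubble measures rooted at antipodal boundary points (Lemma \ref{thealemma}), where chordal reversibility enters only to glue the tails of the two bubbles, and (iii) the Lawler--Rezaei Minkowski-content results giving $|V^i_{\eps_n}(\gamma)|\sim|V^e_{\eps_n}(\gamma)|$ (Lemma \ref{minkovski}). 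None of this analytic input appears in your outline, and without it (or a genuine proof of your claimed SLE identification of $\gamma_j^i$) the argument does not close.
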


Let us now explain how Proposition~\ref{nui=nue} implies Theorem \ref {mainthm} (we defer the proof of the proposition to the next section):
First, note that Proposition \ref{nui=nue} 
implies immediately that 
 $Q^{\to e} \nu^e = Q^{ \to e} \nu^i =  \nu^e$, so that $\nu^e$ equal to is a constant times $\nucle$ (because it is invariant under $Q^{\to e}$). 
 Let us rephrase this rather surprising fact as a corollory in order to stress it:
 \begin {corollary}
 The kernels $Q^{\to e}$ and $Q^{\to i}$ have the same scale-invariant measures.
 \end {corollary}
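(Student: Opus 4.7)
The plan is straightforward once Proposition~\ref{nui=nue} is in place: combine it with the identity $Q^{\to e} \nu^i = \nu^e$, which is already built into the construction of the kernels, together with the uniqueness statements for scale-invariant measures fixed by $Q^{\to e}$ and $Q^{\to i}$ recorded in Sections~2 and~3.

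First I record that by the very definition of the Markov kernel, applying $Q^{\to e}$ at the inner boundary $\gamma_j^i$ of a cluster $K_j$ produces precisely the outer boundary $\gamma_{j+1}^e$ of the next enclosed cluster $K_{j+1}$; integrating this against the intensity measure $\nu^i$ of the family $(\gamma_j^i)_{j \in J}$ yields
\[ Q^{\to e} \nu^i = \nu^e, \]
as is already noted in the paragraph preceding the corollary. By Proposition~\ref{nui=nue}, $\nu^e = \nu^i$, so $Q^{\to e} \nu^i = \nu^i$; in other words, the scale-invariant measure $\nu^i$, which is $Q^{\to i}$-invariant by construction, is also $Q^{\to e}$-invariant.

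Next I invoke the two uniqueness statements recalled in the roadmap: up to a multiplicative constant, $\nucle$ is the unique scale-invariant measure fixed by $Q^{\to e}$ (a consequence of Proposition~\ref{prop: coupling of nested CLEs}, which forces $P^{\rm cle}$ to be the unique stationary shape distribution for $\tilde Q^{\to e}$), while $\nu^i$ is the unique scale-invariant measure fixed by $Q^{\to i}$ (by the analogous argument applied to the SLE$_{8/3}$ loop-soup clusters). The previous step forces $\nu^i$ to be a positive multiple of $\nucle$; symmetrically, $\nucle$ is then both $Q^{\to i}$-invariant and $Q^{\to e}$-invariant, since it is proportional to $\nu^i$. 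Hence the cone of scale-invariant $Q^{\to e}$-fixed measures and the cone of scale-invariant $Q^{\to i}$-fixed measures are both one-dimensional and both equal to the ray generated by $\nucle$, which is precisely the statement of the corollary.

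The only real obstacle in this argument is Proposition~\ref{nui=nue} itself; once that equality between the two intensity measures is granted, the corollary follows by a purely formal bookkeeping with the invariance and uniqueness properties of the three kernels, with no further input from SLE geometry or the loop-soup construction required.
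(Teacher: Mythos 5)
Your argument is correct and is essentially the paper's own: from $Q^{\to e}\nu^i=\nu^e$ and Proposition \ref{nui=nue} one gets that $\nu^e=\nu^i$ is $Q^{\to e}$-invariant, and the uniqueness (up to constants) of the scale-invariant measures fixed by $Q^{\to e}$ and by $Q^{\to i}$ then identifies both invariant rays with the one generated by $\nucle$. No gaps; this matches the paper's deduction in the roadmap section.
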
   
 As we know  that $\nu^e$ is the image of $\nu^i$ under $z \mapsto 1/z$, we can already conclude that $\nucle$ is in fact invariant under inversion. 
 
 \medbreak
 In order to prove Theorem \ref {mainthm}, it is  sufficient to prove the invariance in distribution under the map $z \mapsto 1/z$ of the 
nested family CLE$(\infty \to 0)$ of loops $(\gamma_j, j \in J)$.
 Indeed, on each of the successive annuli (in between $\gamma_j$ and $\gamma_{j+1}$), the conditional distribution (given the sequence $(\gamma_j)$) of the other loops of the same ``generation'' as $\gamma_{j+1}$ in the nested CLE (that are surrounded by $\gamma_j$ but by no other loop in between them and $\gamma_j$) is given by the outermost boundaries of loop-soup clusters in the annulus between 
 $\gamma_j$ and $\gamma_{j+1}$, conditioned to have no cluster that surrounds $\gamma_{j+1}$. This description of the conditional distribution is nicely invariant   
 under inversion (because the loop soups are), and this proves readily that the law of the entire nested CLE is invariant under $z \mapsto 1/z$. Since we already have translation-invariance and scale-invariance, this implies Theorem \ref {mainthm}.

 \medbreak 
 It now remains to prove that the law of 
 the nested family CLE$(\infty \to 0)$ of loops $(\gamma_j, j \in J)$
 is invariant under inversion. 
 Before explaining this, let us first make a little side-remark:
 Let us define the successive concentric annuli $(A_j, j \in J)$ in the nested CLE sequence
where $A_j$ denotes the annular region in between the loop $\gamma_j$ and its successor $\gamma_{j+1}$ (i.e. the next loop
in the sequence, inside of $\gamma_j$). As before, one can also define the  scale-invariant ``intensity measure'' on the set of annuli that we call 
$\nu^A$. The Markovian definition of the nested CLE sequence shows immediately that $\nu^A$ can be described from the product measure $\nucle \otimes \tilde P$ as follows:
\begin {itemize}
 \item $\tilde P$ is the law of the outside-most loop $\tilde \gamma$ that contains the origin in a CLE in the unit disc.
 \item Starting from a couple $(\gamma, \tilde \gamma)$, one defines the annulus $A$ that is between $\gamma$ on the one hand (that is therefore the outer loop of the annulus) and $\phi_\gamma (\tilde \gamma)$ where $\phi_\gamma$ is the conformal map from the unit disc onto the inside of $\gamma$ such that $\phi(0)=0$ and $\phi'(0) > 0$.
\end {itemize}
But, one can observe that almost surely, in a nested CLE sequence, only one annulus between successive loops (that we call $A(1)$) does contain the point $1$. Hence, $\nu^A$ restricted to those annuli that contain $1$ is a probability measure, and this probability measure $P^{A, 1}$ is the law of $A(1)$. 

One can apply a similar construction to the full-plane SLE$_{8/3}$ loop soup, focusing on the annuli that are in between a loop $\gamma_j^i$ and the next outer boundary $\gamma_{j+1}^e$. It follows (using the fact that $\nucle$ is equal to a constant times $\nu^e$) that, up to a multiplicative constant (corresponding to the probability that a given point is in between to such loops) the measure $\nu^A$ describes also the 
intensity measure of such annuli in the full-plane loop soup. We can now use the inversion-invariance of the full-plane loop soup and the fact that $\nu^i=\nu^e$, to 
conclude that  
 that $\nu^A$ can also be constructed from inside-out as follows: Define the inner loop via $\nucle$ and choose the outer loop by sampling a CLE in the outside of the inner loop, and take the innermost loop in this CLE. From this, it follows that $\nu^A$ is invariant under $z \mapsto 1/z$, and therefore $P^{A, 1}$ too. 

In order to prove the inversion invariance of the law of the entire nested sequence $(\gamma_j, j \in J)$, we proceed in almost exactly the same way, except that 
we now focus on the joint law of the $2n_0$ loops ``closest'' to $1$ in the sequence: 
 Let us index the 
 loops by $1/2 + \Z$ in such a way that the point $1$ is in between the two successive loops $\gamma_{-1/2}$ and $\gamma_{1/2}$. 
  Let us choose any integer $n_0 \ge 1$, and look at the random family consisting of the $2n_0 $ loops nearest to the point $1$ i.e. 
 $$ \Gamma^{n_0}:= (\gamma_{-n_0+1/2}, \ldots, \gamma_{-1/2}, \gamma_{1/2}, \ldots , \gamma_{n_0 -1/2}).$$

One way to describe the law of of $\Gamma^{n_0}$ is to start with the infinite measure on $2n_0+2$-tuples of loops obtained by defining the first one according to 
the infinite scale-invariant measure $\nucle$ and then to use $2n_0 -1$ times the Markov kernel $Q^{\to e}$ in order to define its successors, and then to restrict this infinite measure to the set of $(2n_0 +2)$-tuples of loops such that $1$ is in between the two middle ones.

Exactly the same arguments as for $n_0=1$ then show that $\Gamma^{n_0}$ can alternatively 
be defined inside out, so that the law of $\Gamma^{n_0}$ ---
and therefore of the entire sequence, as this holds for all $n_0$ ---
is invariant under $z \mapsto 1/z$.

\subsection {Remarks on the Markov chain of annular regions}

Note that the previous annuli measure $\nu^A$ is scale-invariant; we can therefore define its associated shape probability measure $P^A$.
We will denote by $m (A)$ the unique $m <1$ such that $A$ can be mapped conformally onto $\{ z \, : \, m < |z| < 1 \}$. 

We can note that with the description of $\nu^A$ via $\nucle \otimes \tilde P$, the modulus $m(A)$ of the annulus is fully encoded by 
$\tilde \gamma$ (as it is the modulus of 
the part of the unit disc that is outside of $\tilde \gamma$). In particular, restricting $\nu^A$ to the set of annuli of a certain modulus (say for $m(A) \in (m_1, m_2)$), one obtains a scale-invariant measure on annuli described by the previous method from  $\nucle \otimes \tilde P^{m_1, m_2}$ (where $\tilde P^{m_1, m_2}$ means the probability $\tilde P$ restricted to those loops that define an annulus with modulus in $(m_1, m_2)$). 
In other words, the ``marginal measure'' on the outside of such annuli is just a constant $c(m_1, m_2)$ times $\nucle$, and its shape probability is still $P^{\rm cle}$. 

But our
CLE symmetry result shows that it is also possible to view the nested CLE sample as being defined iteratively from inside to outside. Furthermore, the modulus of 
an annulus $A$ surrounding the origin and of $1/A$ are identical. Hence, it follows immediately that the marginal measure of the inside loop of the annulus (restricted to those  with modulus in $(m_1, m_2)$ is also $c(m_1, m_2) \nucle$ and that its shape probability measure is again $P^{\rm cle}$. 

Hence, it follows  that:

\begin {proposition}
 If we define the Markov kernel $Q^{\to e, (m_1, m_2)}$ just as $Q^{\to e}$ except that we restrict ourselves to those jumps that correspond to an annulus with modulus in $(m_1, m_2)$, then $\nucle$ is again (up to a multiplicative constant) its unique invariant measure. 
 \end {proposition}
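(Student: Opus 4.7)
The plan is to verify invariance and then uniqueness, in both cases reusing ingredients from the preceding discussion.

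For invariance, I would disintegrate the annular measure as $\nu^A(d\gamma,d\gamma') = \nucle(d\gamma)\,Q^{\to e}(\gamma,d\gamma')$, which is the Markovian construction of $\nu^A$. Restricting to annuli of modulus in $(m_1,m_2)$ produces $\nucle(d\gamma)\otimes Q^{\to e,(m_1,m_2)}(\gamma,d\gamma')$. By scale- and conformal-invariance of $Q^{\to e}$, the probability that the resulting annulus has modulus in $(m_1,m_2)$ does not depend on $\gamma$ and equals the constant $c(m_1,m_2)$ already identified; hence the outer marginal of $\nu^{A,(m_1,m_2)}$ is $c(m_1,m_2)\nucle$. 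The CLE inversion symmetry (established in the preceding subsections as a consequence of Proposition~\ref{nui=nue} and the inversion invariance of the SLE$_{8/3}$ loop soup) gives the alternative inside-out description of $\nu^A$, which identifies the inner marginal of $\nu^{A,(m_1,m_2)}$ as $c(m_1,m_2)\nucle$ as well. Writing the inner marginal as $\int\nucle(d\gamma)\,Q^{\to e,(m_1,m_2)}(\gamma,\cdot)$ and equating with $c(m_1,m_2)\nucle(\cdot)$ yields the eigenvector relation, which, after normalizing to the probability kernel $\bar Q:=c(m_1,m_2)^{-1}Q^{\to e,(m_1,m_2)}$, reads $\nucle\,\bar Q=\nucle$.

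For uniqueness, I would realize $\bar Q$ pathwise: run the $Q^{\to e}$-chain from a given starting loop, and output the first loop for which the preceding annulus has modulus in $(m_1,m_2)$. Because the successive annular moduli along the $Q^{\to e}$-chain are i.i.d.\ (by the Markov property and scale-invariance), this sub-chain is almost surely well defined and is Markov with kernel $\bar Q$. Proposition~\ref{prop: coupling of nested CLEs} couples $Q^{\to e}$-chains from any two initial domains so that they eventually coincide, and under this coupling the two corresponding sub-chains also eventually coincide. The coupling-based uniqueness argument for the scale-invariant measure invariant under $Q^{\to e}$, already given earlier, then carries over verbatim to $\bar Q$ and gives uniqueness (up to a multiplicative constant) of $\nucle$ as a scale-invariant measure invariant under $Q^{\to e,(m_1,m_2)}$.

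The conceptual heart of the argument --- the inside-out description of $\nu^{A,(m_1,m_2)}$ via CLE inversion symmetry --- is already in hand by the time of the proposition. The main remaining obstacle is therefore bookkeeping: checking that the sub-chain is genuinely Markov with the claimed kernel and that the full-chain coupling restricts well to the sub-chain. Both are routine given the i.i.d.\ structure of the annular moduli along the $Q^{\to e}$-chain.
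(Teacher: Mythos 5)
Your invariance argument is the paper's: disintegrate $\nu^A$ as $\nucle \otimes Q^{\to e}$, note that the modulus of the annulus is a function of the independent loop $\tilde\gamma$ alone (hence the conditioning event has constant probability $c(m_1,m_2)$), and use the inside-out description coming from inversion symmetry to identify the inner marginal of the restricted annulus measure as $c(m_1,m_2)\,\nucle$; this gives $\nucle\, Q^{\to e,(m_1,m_2)} = c(m_1,m_2)\,\nucle$, exactly as in the text preceding the proposition.

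The uniqueness half, however, contains a genuine error: the pathwise construction you propose does not realize the kernel $\bar Q := c(m_1,m_2)^{-1} Q^{\to e,(m_1,m_2)}$. Running the $Q^{\to e}$-chain and outputting the first loop whose preceding annulus has modulus in $(m_1,m_2)$ gives a Markov chain whose kernel is $\sum_{n\ge 0} \bigl(Q^{\to e}\ind_{m\notin(m_1,m_2)}\bigr)^n \,Q^{\to e}\ind_{m\in(m_1,m_2)}$, i.e.\ the chain \emph{watched at success times}, not the \emph{one-step conditioned} kernel $\bar Q$ (which samples a single $Q^{\to e}$-jump conditioned on the modulus event). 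These kernels are different: for instance, under $\bar Q$ the log-conformal-radius decrement is one jump $\xi$ conditioned on success, while under the watched kernel it is a sum of a geometric number of jumps, so it is stochastically strictly larger when $c(m_1,m_2)<1$. Consequently your coupling argument (which does restrict correctly to the watched sub-chain, since the moduli are i.i.d.\ and the full chains coalesce by Proposition \ref{prop: coupling of nested CLEs}) proves uniqueness for the watched kernel, not for $Q^{\to e,(m_1,m_2)}$; and one cannot transfer it back, because for a general scale-invariant $\nu$ the relation $\nu\, Q^{\to e}\ind_{m\in(m_1,m_2)} = c\,\nu$ gives no information about $\nu\, Q^{\to e}\ind_{m\notin(m_1,m_2)}$, hence no invariance of $\nu$ under the watched kernel and no way to invoke its uniqueness. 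To close the gap you would have to run the coalescing-coupling argument directly for the conditioned chain, whose step law is $\tilde P$ conditioned on the modulus event and iterated conformally: this requires re-verifying the exponential tail estimate (\ref{lem1})--(\ref{lem3}) and the positive-probability coupling (\ref{lem4}) under the conditioning (the latter is not automatic, since even when the two first loops are coupled to be identical, the two surrounding annuli, and hence the two conditioning events, differ). That adaptation, or some substitute for it, is the actual content of the uniqueness claim and is missing from your proposal.
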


The following two extreme cases are of course particularly worth stressing:
\medbreak

\noindent
(a) \quad When $m_1=0$ and $m_2$ gets very small, we see on the one hand by standard distortion estimates that the shapes of the inside loop and of $\tilde \gamma$ become closer and closer, and on the other hand, that shape of the inside loop is always described by the shape of $\nu$. Hence, this leads to the following description of the CLE shape distribution $P^{\rm cle}$:
\begin {corollary}
 Consider a CLE in the unit disc and let $\hat \gamma$ denote the outermost CLE loop that surrounds the origin, and let $m$ denote the modulus of the annulus between $\hat \gamma$ and the unit circle. Then, the law of the shape of $\hat \gamma$ conditioned on the event $\{ m < \eps \}$ does converge to the shape distribution $P^{\rm cle}$ as $\eps \to 0$.
\end {corollary}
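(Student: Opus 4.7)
The plan is to combine the shape identity for the inner loop of a small-modulus annulus, established in the discussion preceding the above proposition via the inversion symmetry of $\nu^A$, together with the product factorisation $\nu^A\equiv\nucle\otimes\tilde P$ of the annulus measure and a conformal distortion estimate, to transfer the shape identity to the loop $\tilde\gamma=\hat\gamma$ itself. Recall that in this factorisation the outer loop $\gamma$ is sampled according to $\nucle$ and, independently, $\tilde\gamma$ according to $\tilde P$, with the inner loop of the annulus being $\phi_\gamma(\tilde\gamma)$; since the modulus $m(A)$ depends only on the shape of $\tilde\gamma$, the conditional law of $\tilde\gamma$ under $\nu^A|_{\{m(A)<\eps\}}$ coincides with the conditional law of $\hat\gamma$ under $\tilde P$ given $\{m<\eps\}$, which is exactly the distribution whose weak limit the corollary describes.

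The inside--out description of $\nu^A$ already gives one half of what is needed: the marginal shape distribution of $\phi_\gamma(\tilde\gamma)$ under $\nu^A|_{\{m(A)\in(0,\eps)\}}$ equals $P^{\rm cle}$ for every $\eps>0$, not only in the limit. It therefore suffices to show that, as $\eps\to 0$, the shape of $\tilde\gamma$ and the shape of $\phi_\gamma(\tilde\gamma)$ agree asymptotically in distribution. Gr\"otzsch's modulus inequality forces $\tilde\gamma$ into a thin conformal neighbourhood of $\partial\U$ once $m(A)<\eps$, and the Koebe distortion theorem applied to $\phi_\gamma$ on this neighbourhood then says that, after dividing out by the global scale factor $|\phi_\gamma'(0)|=e^{\rho(\gamma)}$, the map $\phi_\gamma$ acts on the support of $\tilde\gamma$ almost as a similarity. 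Consequently $S(\phi_\gamma(\tilde\gamma))$ and $S(\tilde\gamma)$ differ by only $o_\eps(1)$ in a suitable metric on shape space and in probability.

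The main obstacle I expect is making this last distortion comparison truly quantitative. The outer loop $\gamma$ is a random fractal SLE-type curve, so $\phi_\gamma$ can behave badly near $\partial\U$; one has to argue that the rescaled non-conformality of $\phi_\gamma$ on the portion of $\U$ where $\tilde\gamma$ must live does vanish in the $\eps\to 0$ limit, possibly after discarding an exceptional set of outer loops of arbitrarily small $\nucle$-probability. The robustness of the argument comes from the fact that only the scale-equivalence class of the loop enters, which absorbs any bounded global rescaling and leaves enough flexibility to control the shape metric. Combining these ingredients, the conditional law of the shape of $\hat\gamma$ given $\{m<\eps\}$ converges weakly to $P^{\rm cle}$ as $\eps\to 0$, which is the claim.
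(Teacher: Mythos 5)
Your overall strategy is exactly the paper's: use the inside--out description of $\nu^A$ (which comes from the inversion symmetry) to see that the shape of the inner loop $\phi_\gamma(\tilde\gamma)$ under $\nu^A$ restricted to $\{m(A)<\eps\}$ is exactly $P^{\rm cle}$ for every $\eps$, and then compare the shapes of $\phi_\gamma(\tilde\gamma)$ and of $\tilde\gamma=\hat\gamma$ by a distortion estimate; the bookkeeping in your first paragraph (the modulus depends only on $\tilde\gamma$, so the conditioned $\tilde\gamma$-marginal is the law in the corollary) is fine. The gap is that you have read the conditioning event backwards. In this paper $m(A)$ is defined as the number $m<1$ for which $A$ is conformally equivalent to $\{z:\ m<|z|<1\}$, so $\{m<\eps\}$ means the annulus between $\hat\gamma$ and the unit circle is conformally \emph{thick}. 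By the Gr\"otzsch-type estimate this forces $\hat\gamma$ to lie in a disc of radius $O(\eps)$ \emph{around the origin} (this is the ``very small loops'' regime the paper alludes to), not in a thin neighbourhood of $\partial\U$. With the correct reading the comparison step is immediate and uniform in $\gamma$: Koebe distortion for the univalent map $\phi_\gamma$ on the disc of radius $O(\eps)$ about $0$ gives $\phi_\gamma(z)=\phi_\gamma'(0)\,z\,(1+O(\eps))$ there, so $S(\phi_\gamma(\tilde\gamma))$ and $S(\tilde\gamma)$ agree up to $o_\eps(1)$ with no exceptional set of outer loops needed --- these are the ``standard distortion estimates'' of the paper.

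In the regime you actually describe ($\tilde\gamma$ confined near $\partial\U$), the step you single out as the ``main obstacle'' is not a technical difficulty but a false statement: for every outer loop $\gamma$ whose boundary is a non-smooth SLE-type curve, $\phi_\gamma$ is not approximately a similarity (after dividing by $\phi_\gamma'(0)$) on \emph{any} thin neighbourhood of $\partial\U$, and discarding a small $\nucle$-exceptional set of outer loops cannot rescue this, since the failure occurs for essentially all of them. That regime is the paper's case (b) (modulus parameter $m$ close to $1$, degenerate annuli), which is a different and more delicate statement than the corollary. So as written your argument does not prove the corollary; once $\{m<\eps\}$ is interpreted with the paper's convention, however, your plan collapses onto the paper's short proof.
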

Loosely speaking, the very small loops in a simple non-nested CLE describe the stationary shape $P^{\rm cle}$.

\medbreak
\noindent
(b) \quad
 When $m_2=1$ and $m_1$ is very close to one, then when $m (A) > m_1$, the inside and the outside loop are (in some conformal way) conditioned to get very close 
to each other. Again, both the outer and the inner shape are always described by $P^{\rm cle}$. It is actually possible to make sense of the limiting kernel 
$Q^{\to e, (m_1, 1)}$ as $m_1 \to 1$. This gives a scale-invariant measure on ``degenerate'' annuli where the inside and outside loops intersect, 
and where the marginals of the shape measure for both the inside and the outside loops are described via $P^{\rm cle}$. In the case where $\kappa = 4$, this is very closely related to 
the conformally invariant growing mechanism described in \cite {WW} and to work in progress, such as e.g. \cite {SWW}.

\section {Proof of $\nu^i = \nu^e$}

\subsection {Exploring (i.e. dynamically constructing) loop-soup clusters}

In the present subsection, we review some ideas and tools introduced in \cite {ShW} about simple CLEs, and discuss some consequences in the present setup. 

In the sequel, we will say that a conformal transformation $\varphi$ from $\HH$ onto a subset of the Riemann sphere defines a ``marked domain'' (as it gives information about the domain $\varphi (\HH)$ as well as the image of some marked points, say of $i$ and $0$). 

In \cite {ShW} (see also \cite {WW}), it has been studied and explained how to construct a simple CLE in a simply connected domain $D$ from a Poisson point process of SLE bubbles. 
Let us briefly and somewhat informally recall this construction in the case where $D$ is the disc of radius $R$ around the origin. First, note that when one continuously 
moves from $-R$ along the segment $[-R, 0]$, one encounters loops of a CLE one after the other. 
The CLE property loosely speaking states that if one discovers the whole loop 
as soon as one bounces it, then the law of the loops in the remaining 
to be explored domain is still that of a CLE in that domain. 
This leads to the 
fact that the loops that one discovers can be viewed as arising from a Poisson point process of boundary bubbles (that turn out to be SLE$_\kappa$ bubbles)-- see \cite {ShW} for details. 
And indeed, it is in fact possible to construct a CLE, when starting from a Poisson point process of such SLE$_\kappa$ bubbles. More precisely, one 
first defines the infinite measure $\mu$ on SLE$_\kappa$ loops in the upper half-plane that touch the real line only at the origin. This is the appropriately scaled 
limit when $\epsilon \to 0$ of the law of an SLE$_\kappa$ from $0$ to $\epsilon$ in the upper half-plane. Then, one considers a Poisson point process of such bubbles with intensity $\mu$ and from this Poisson point process, one can construct all the simple non-nested CLE loops that for instance intersect a circle $C$ that surrounds the origin 
 (one just replaces the previous segment by the path $[-R, -r]$, where $-r \in C$ to which one then attaches the circle $C$ (along which one then moves continuously and discovers all the CLE loops that it intersects). 
Note also that if one discovers a loop that surrounds the circle $C$ on the way, then it is possible to continue the exploration in its inside if one is considering a nested CLE. In this way, one can constructs in fact all the loops that intersect $C$ in a {\em nested} CLE. See \cite {ShW} for details.

\begin{figure}[ht]
\begin{center}
\includegraphics[scale=0.4]{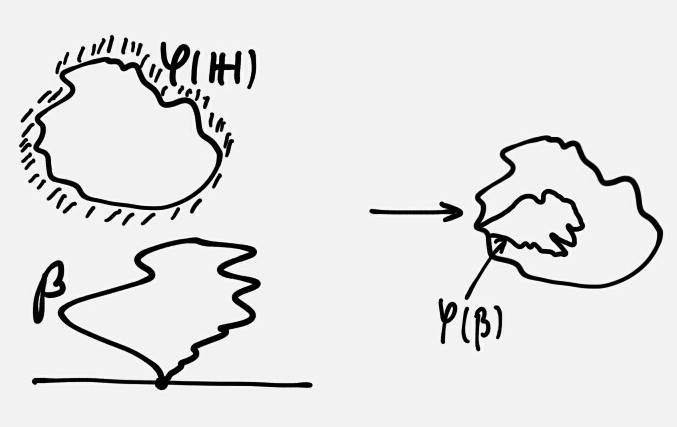}
\end{center}
\caption {From $\varphi$ and $\beta$ to the loop $\varphi (\beta)$}
\end {figure}

This procedure shows the existence of a measure $\rho^{C}_R$ on the set of marked domains
so that the image of the measure $\rho^{C}_{R} \otimes \mu$ under the map $(\varphi, \beta) \mapsto \varphi (\beta)$, and restricted to those pairs for which 
 $\varphi (\beta)$  intersects $C$, is equal exactly to the intensity measure of nested CLE loops in the disc of radius $R$, restricted to those that intersect $C$. In other words, for any set $A$ of loops that intersect $C$, if $\Gamma_R$ denotes a nested CLE in the disc of radius $R$, then
\begin {equation}
 \label {relation1}
 \EX \left( \sum_{\gamma \in \Gamma_R} \ind_{ \gamma \in A} \right) = (\rho^{C}_{R} \otimes \mu ) ( \{ ( \varphi, \beta) \, : \, \varphi (\beta) \in A \} ).
 \end {equation}
The previously described convergence and coupling arguments on nested CLEs when $R \to \infty$ in fact readily show that the previous statement also holds in the full-plane CLE setting (just continuing independently the exploration inside each of the discovered loops). More precisely:

\begin {lemma}
There exists a measure $\rho^C$, so that if $\Gamma$ denotes a full-plane CLE, (\ref {relation1}) holds when one replaces $(\Gamma_R, \rho^C_R)$ by 
$(\Gamma, \rho^C)$.
\end {lemma}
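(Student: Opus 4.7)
The plan is to construct $\rho^C$ as a limit of $\rho^C_R$ as $R \to \infty$ and to verify the identity by passing to the limit in (\ref{relation1}), exploiting the coupling between $\Gamma_R$ and the full-plane CLE $\Gamma$ provided by Proposition~\ref{prop: coupling of nested CLEs}.

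The key step is to establish convergence of the intensity measure on the left-hand side of (\ref{relation1}) for a rich family of test sets. I would fix $r > \eps > 0$ and restrict attention to sets $A$ consisting of loops that intersect $C$, are contained in the ball $B_r$ of radius $r$ around the origin, and have diameter at least $\eps$. The iterated coupling underlying the construction of the full-plane CLE produces a joint realization of $\Gamma_R$ and $\Gamma$ for which the two configurations coincide inside $B_{2r}$ with probability $p_R \to 1$; on this event
$\sum_{\gamma \in \Gamma_R}\ind_{\gamma \in A} = \sum_{\gamma \in \Gamma}\ind_{\gamma \in A}$ deterministically. Both sums are dominated by the total number of CLE loops in $B_{2r}$ of diameter at least $\eps$, whose expectation is bounded uniformly in $R$ since such loops are determined by the loop soup in a slightly enlarged fixed region. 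Uniform integrability then gives
$$ \EX\Bigl(\sum_{\gamma\in\Gamma_R}\ind_{\gamma\in A}\Bigr) \;\longrightarrow\; \EX\Bigl(\sum_{\gamma\in\Gamma}\ind_{\gamma\in A}\Bigr),$$
and by (\ref{relation1}) the quantities $(\rho^C_R \otimes \mu)(\{(\varphi,\beta) : \varphi(\beta) \in A\})$ converge to the same limit.

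To pin down $\rho^C$ itself, I would note that the exploration procedure defining $\rho^C_R$ (moving inward from $-R$ and then continuously along $C$) converges, under the above coupling, to an analogous full-plane exploration of $\Gamma$: when the two CLEs agree inside $B_{2r}$, the marked-domain records produced while discovering the loops of $A$ agree as well. Taking this limiting exploration as the definition of $\rho^C$, the identity (\ref{relation1}) passes to the limit for $A$ of the above form, and then to every measurable set of loops intersecting $C$ by letting $\eps \downarrow 0$ and $r \uparrow \infty$ via monotone convergence. A standard monotone-class / $\sigma$-additivity check confirms that $\rho^C$ so defined is a genuine $\sigma$-finite measure on marked domains.

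The main obstacle is the uniform-in-$R$ local-finiteness estimate, namely the bound on the expected number of CLE loops in $B_{2r}$ of diameter at least $\eps$ that is independent of the ambient radius $R$. This is where one uses in a concrete way that, for $R$ large, such loops are read off from the loop soup in a fixed region whose law does not depend on $R$; the rest of the argument is essentially formal, consisting of soft convergence and extension arguments.
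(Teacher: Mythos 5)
Your overall strategy is the one the paper itself has in mind: couple $\Gamma_R$ with the full-plane CLE via Proposition~\ref{prop: coupling of nested CLEs}, and pass (\ref{relation1}) to the limit on loops that hit $C$, have diameter at least $\eps$ and lie in a fixed ball. However, two of the steps as you wrote them do not hold. First, the uniform-in-$R$ local-finiteness bound, which you rightly single out as the main obstacle, is justified by the claim that the nested CLE loops in $B_{2r}$ of diameter at least $\eps$ ``are determined by the loop soup in a slightly enlarged fixed region''. That is false for the \emph{nested} CLE: only the first generation is read off from the loop soup in $R\U$; every deeper generation is produced from a fresh, independent loop soup inside each previously discovered loop, so the collection of loops landing in $B_{2r}$ depends on the entire cascade from scale $R$ down to scale $r$, and the number of generations involved grows with $R$. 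A correct uniform bound has to come from elsewhere, for instance from the renewal structure of the log-conformal-radius jumps along the chains to the points of a finite net of $B_{2r}$ (the estimates (\ref{lem1}) and (\ref{lem2}) are uniform in the ambient domain), combined with the disjointness of the loops crossing a fixed annulus; as it stands, the uniform integrability that your limiting argument needs is asserted for a wrong reason.

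Second, the sentence ``when the two CLEs agree inside $B_{2r}$, the marked-domain records produced while discovering the loops of $A$ agree as well'' is not accurate. The marked domain at a discovery time is the whole unexplored component, which extends out to $\partial(R\U)$ (respectively to $\infty$) and records all previously discovered loops far from the origin; these differ between the two coupled configurations even on the agreement event, so the records do not coincide and $\rho^C$ cannot be obtained as a naive almost-sure limit of them. What does agree on the coupling event is the set of discovered loops inside $B_{2r}$, i.e.\ the left-hand side of (\ref{relation1}). To conclude you should either define $\rho^C$ intrinsically as the intensity measure of marked domains for the exploration performed on the full-plane CLE itself (continuing independently inside each discovered loop, as the paper indicates) and then transfer the conditional bubble-law identity from finite $R$ using convergence of the marked domains in the Carath\'eodory sense seen from $C$, or else argue convergence of the pushforward measures $(\rho^C_R\otimes\mu)$ restricted to loops meeting $C$, rather than convergence of $\rho^C_R$ itself.
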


\medbreak
Let us now consider a full-plane SLE$_{8/3}$ loop soup instead and its clusters. One can then apply almost the  same argument as in the nested CLE to obtain the following statement: Let $\Gamma_e$ denote the set of {\em outer boundaries} of clusters in this full-plane SLE$_{8/3}$ loop soup. Then:
\begin {lemma}
 There exists a measure $\tilde \rho_C$ on the set of marked domains, so that (\ref {relation1}) holds when one replaces $(\Gamma_R, \rho^C_R)$ by $(\Gamma_e, \tilde \rho^C)$.
\end {lemma}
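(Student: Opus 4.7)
My plan is to reduce the claim to the preceding lemma by showing that $\Gamma_e$ has the same law (as a random collection of loops) as the full-plane nested CLE $\Gamma$. Once this is established, setting $\tilde\rho^C := \rho^C$ finishes the proof.

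The first step is the bounded analog. Fix $D = R\U$ and let $\Gamma_e^R$ denote the set of outer boundaries of all clusters of the loop soup in $D$. By \cite{ShW}, the outer boundaries of the outermost clusters form a (non-nested) CLE$_\kappa$ in $D$. Because the Poisson loop soup in $D$, when restricted to any connected subdomain of $D$, is again an independent Poisson loop soup, one may iterate the Sheffield--Werner statement inside each inside component of every cluster. This shows that $\Gamma_e^R$ is distributed as the nested CLE$_\kappa$ in $D$, i.e.\ as $\Gamma_R$, so that the preceding lemma in $D$ already produces a measure $\tilde\rho^C_R = \rho^C_R$ satisfying (\ref{relation1}) with $\Gamma_e^R$ in place of $\Gamma_R$.

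The second step is to pass to $R \to \infty$. On an event of probability tending to one, the clusters of the full-plane loop soup that intersect $C$ should be in bijection with those of the $D$-soup that intersect $C$. In one direction, full-plane clusters are almost surely bounded (Section~2), so every such cluster lies in $D$ for $R$ large; its loops all survive the restriction to $D$, and since restriction can only split clusters and never merge them, the cluster remains a cluster of the $D$-soup. In the other direction, the finite SLE$_{8/3}$-mass of loops straddling a fixed annulus (also recalled in Section~2) ensures that, with probability tending to one, no chain of intersecting full-plane soup loops connects a $D$-soup cluster meeting $C$ to a loop exiting $D$; hence no merging occurs when passing to the full plane. Combining this matching of loop collections with the convergence of the nested CLE in $D$ to $\Gamma$ from Proposition~\ref{prop: coupling of nested CLEs}, one concludes that the restrictions of $\Gamma_e$ and of $\Gamma$ to loops meeting $C$ have the same law, and the desired identity follows from the preceding lemma.

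The step I expect to present the main technical obstacle is making this bijection quantitative enough to transfer from almost-sure agreement of the loop collections to equality of intensity measures. This amounts to an $L^1$-type bound, uniform in $R$, on the number of outer cluster boundaries in the $D$-soup that meet $C$. Such a bound should follow from scale-invariance together with the finite expected number of loop-soup clusters surrounding the origin at any given scale, i.e.\ the same estimate used at the end of Section~2 to define the intensity measures $\nu$, $\nu^i$ and $\nu^e$.
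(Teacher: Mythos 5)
Your reduction fails at its first step: the collection $\Gamma_e$ of outer boundaries of \emph{all} clusters of a single SLE$_{8/3}$ loop soup is \emph{not} distributed like the nested CLE, and the iteration you invoke does not prove it. The nested CLE is defined by sampling a \emph{fresh, independent} loop soup in the whole interior of each discovered loop. For a single loop soup, by contrast, once an outermost cluster $K$ with outer boundary $\gamma$ is present, the deeper clusters live only in the holes of $K$ (the complementary components of $K$ inside $\gamma$), not in the full interior of $\gamma$; and if one conditions only on $\gamma$, the loops of the same soup lying inside $\gamma$ are not an unconditioned loop soup there (they are constrained to contain a cluster whose outer boundary is exactly $\gamma$). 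So ``iterating Sheffield--Werner inside each inside component of every cluster'' produces precisely the chain of Section 2, in which consecutive loops around a point alternate the passage from the outer to the inner boundary of a cluster with one CLE step $Q^{\to e}$ (the structure $\gamma_j^e \succ \gamma_j^i \succ \gamma_{j+1}^e$), and not the nested CLE chain, whose steps are single applications of $Q^{\to e}$. Indeed, if $\Gamma_e$ had the law of the full-plane nested CLE, then $\nu^e = \nucle$ would follow at once, whereas the proportionality $\nu^e = \alpha\,\nucle$ of Theorem \ref{secondthm} is exactly the ``rather surprising'' conclusion that the paper can only reach after proving Proposition \ref{nui=nue}; your step one would short-circuit the main work of Section 4, which is a strong sign that the identification cannot hold. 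Your second step (stabilization of the clusters meeting $C$ as $R \to \infty$, in the spirit of (\ref{coupl})) and the final $L^1$ considerations are not the issue; they serve a false identification.

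The intended proof does not go through the nested CLE at all: one redoes the exploration/Poissonian decomposition argument directly at the level of loop-soup clusters. Exploring the full-plane soup towards $C$, one discovers the boundary-touching \emph{clusters} (rather than CLE loops), whose governing measure replaces the SLE$_\kappa$ bubble measure and is constructed exactly as the CLE bubble measure in Sections 3 and 4 of \cite{ShW}; and whenever a discovered cluster surrounds or intersects $C$, one continues the exploration independently in every complementary component that meets $C$, so as to catch the outer boundaries of all (not only outermost) clusters meeting $C$. The marked-domain measure $\tilde\rho^{C}$ in the statement is the one produced by this exploration, and (\ref{relation1}) for $(\Gamma_e,\tilde\rho^C)$ follows as in the CLE case. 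To repair your write-up you would have to abandon the claimed equality in law of $\Gamma_e$ and $\Gamma$ and construct $\tilde\rho^C$ by such a cluster exploration (or an equivalent device); no identification of $\Gamma_e$ with the nested CLE is available.
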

The two little modifications that are needed in order to justify this fact are:
\begin {itemize}
 \item That one needs to replace the measure on SLE$_\kappa$ bubbles (i.e. boundary-touching loops) by a measure on 
``boundary-touching clusters''. 
The existence and construction of this measure is obtained in exactly the same way as the existence 
and construction of the CLE bubble measure in Sections~3 and 4 of \cite {ShW}.
 \item That when one encounters a cluster that surrounds (or intersects) $C$, then one continues to explore inside all connected components of its complement that intersect the circle $C$ independently.
\end {itemize}

Recall that the full-plane SLE$_{8/3}$ loop soup is invariant under any M\"obius transformation of the Riemann sphere.  
Hence, we can reformulate the previous property after applying the conformal transformation $z \mapsto 1/ (z-z_0)$. 
We therefore obtain, for each point $z_0$ in the plane with $z_0 \not=0$, and any (small) circle $C$ surounding $z_0$, a description of the 
measure on the set of those boundaries of SLE$_{8/3}$ clusters, that intersect  $C$ and separate $z_0$ from the rest of the cluster (this corresponds to the fact that the previous description was describing the ``outer boundaries'' of the clusters, which are those that separate the cluster from infinity). In the sequel, we shall in fact in particular focus on those loops that do disconnect the origin from infinity (i.e. the $\gamma_j^e$ and $\gamma_j^i$ loops). Among those, the previous procedure describes/constructs:
\begin {itemize}
 \item The loops $\gamma^e_j$ that do not surround $z_0$ and intersect $C$.
 \item The loops $\gamma^i_j$ that do surround $z_0$ and intersect $C$. 
\end {itemize}

\begin{figure}[ht]
\begin{center}
\includegraphics[scale=0.5]{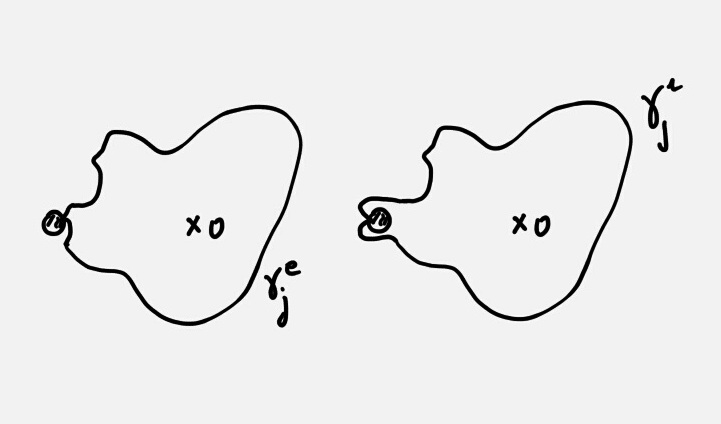}
\end{center}
\caption {The two type of configurations that one can discover}
\end {figure}

\medbreak
In all the remainder this section, when we will mention  ``the $\epsilon$-neighborhood of $z_0$'' in the plane (for $z_0 \not= 0$), this will always mean 
the disc of radius $|z_0| \sinh \epsilon$ around $z_0 \times \cosh \epsilon$ (i.e. with diameter $[z_0 e^{-\epsilon}, z_0 e^{\epsilon}]$). 
In particular, we see that with this definition, (i) when $\epsilon$ is very small, the $\epsilon$-neighborhood of $1$ is quite close to the Euclidean $\epsilon$-neighborhood of $1$.
 Furthermore, (ii) for all $z_0 \not=1$, the $\epsilon$-neighborhood of $z_0$ is equal to the image under $z \mapsto z_0 z$ of the $\epsilon$-neighborhood of $1$, and (iii), the  $\epsilon$-neighborhood of $z_0$ is invariant under the inversion $z \mapsto z_0^2/z$. 

Similarly, we will denote $d(z, K)$ to be the largest $r$ such that $K$ remains disjoint of the $r$-neighborhood of $z$. 

\medbreak

If we apply our previous analysis to the case where the circle $C$ is the boundary of the $\eps$-neighborhood of $z_0$, we therefore obtain the existence of a measure $\tilde \rho^{z_0, \eps}$ 
on marked domains (here the marked domain is simply connected in the Riemann sphere but not necessarily simply connected in $\C$) such that:
\begin {itemize}
 \item The measure $\nu^e$ restricted to those loops that intersect $C$ and do not surround $z_0$, is equal to the image of the measure $\tilde \rho^{z_0, \epsilon} \otimes \mu$ under the mapping $(\varphi, \beta) \mapsto \varphi (\beta)$, when restricted to those loops that do separate $0$ from infinity, and do not surround $z_0$.
 \item The measure $\nu^i$ restricted to those loops that intersect $C$ and do surround $z_0$, is equal to the image of the measure $\tilde \rho^{z_0, \epsilon} \otimes \mu$ under the mapping $(\varphi, \beta) \mapsto \varphi (\beta)$, when restricted to those loops that do separate $0$ from infinity, and do surround $z_0$.
\end {itemize}
We can also note that inversion-invariance of the loop-soup picture shows that in each of these two statements, it is possible to replace $\tilde \rho^{z_0, \epsilon}$ by its image $\hat \rho^{z_0, \epsilon}$ under 
$z \mapsto z_0^2/z$ (it just corresponds to exploring/constructing the image of the loop-soup cluster boundaries under this map).

\medbreak
Our goal is now to build on these constructions in order to show that the measures $\nu^e$ and $\nu^i$ are very close when $\eps \to 0$. 
Let us denote by $V_\eps^e (\gamma)$ (respectively $V_\eps^i (\gamma)$) to be the the set of points that are  in the $\epsilon$-neighborhood of a loop $\gamma$ {\em and } lie outside of it (respectively inside of it).
Clearly, for each loop $\gamma$, 
$$ \int_{\C} \de^2 z \, \ind_{\{ z \in V_\eps^e (\gamma) \} }  / | V_\eps^e(\gamma) |  = 1 ,$$
(where $|V|$ denotes here the {\em Euclidean} area of $V$) so that it is possible to decompose the measure $\nu^e$ as follows:
\begin {eqnarray*}
\nu^e ( F(\gamma) ) 
&=& \nu^e \left( F(\gamma) \int_{\C} \de^2 z \, \ind_{\{ z \in V_\eps^e (\gamma) \} }   / | V_\eps^e(\gamma) | \right)
\\
&=& 
\int_\C \de^2 z \, (\tilde \rho^{z,\eps} \otimes \mu)^e \left(  F( \gamma) / | V_\eps^e(\gamma) |  \right),
\end {eqnarray*}
where  
$(\tilde \rho^{z,\eps} \otimes \mu)^e$ denotes the measure on $\gamma= \varphi (\beta)$ restricted to the configurations where one constructs
$\gamma$ ``from the outside'' a loop surrounding the origin (i.e. $z$ lies on the outside of this loop). 
Rotation and scale-invariance shows that 
$$(\tilde \rho^{z,\eps} \otimes \mu)^e \left(  F( \gamma) / | V_\eps^e(\gamma) |  \right) 
= (\tilde \rho^{1,\eps} \otimes \mu)^e \left(  F( z\gamma) / ( |z|^2 | V_\eps^e(\gamma) |)   \right)
.$$
We can now interchange again the order of integration, which leads to 
\begin {equation}
\label {nue}
\nu^e ( F (\gamma) ) = (\tilde \rho^{1, \eps} \otimes \mu)^e \left( {\tilde F (\gamma) } / {| V_\eps^e(\gamma) |}  \right) 
\end {equation}
where $\tilde F (\gamma) = \int_{\C} \de^2 z \, F ( z \gamma)/ |z|^2 
$ (provided that $F$ is chosen so that the above integrals all converge, for instance if it is bounded and its support is included in the 
set of loops that wind around the origin and stay in some fixed annulus $D(0, r_2) \setminus D(0, r_1)$).  

\medbreak

Using inversion, we get the similar expression for $\nu^i$, 
\begin {equation}
 \label {nui}
\nu^i ( F (\gamma) ) = (\tilde \rho^{1, \eps} \otimes \mu)^i \left(  { \tilde F (\gamma) } / { | V_\eps^i(\gamma) | } \right), 
\end {equation}
where this time,  the notation $(\tilde \rho^{1, \eps} \otimes \mu)^i$ means that we now consider the loops $\gamma = \varphi (\beta)$ that surround both the origin and $y_0$.

Let us stress that the two identities (\ref {nue}) and (\ref{nui}) hold for all $\eps$.

\medbreak

In order to explain what is going to follow in the rest of the paper, let us now briefly (in one paragraph) outline the rest of the proof: 
We  want to prove that $\nu^e (F) = \nu^i (F)$ for a sufficiently wide class of functions $F$ and to deduce from this that $\nu^e = \nu^i$.
In order to do so, we will 
show that in the limit when $\eps \to 0$, the two right-hand sides of (\ref {nue}) and (\ref {nui}) above behave similarly. 
For this, we will use the results and ideas of \cite {LR} on the Minkowski content of SLE paths, that loosely speaking  will show that when $d = 1 + \kappa/8$, for 
$\nu^e$ and $\nu^i$ almost all loop $\gamma$, there exists a deterministic sequence $\eps_n $ that tends to $0$  such that 
(this will be Lemma \ref {minkovski}),
$$  | V_{\eps_n}^i(\gamma) |  \sim  | V_{\eps_n}^e (\gamma) | \sim \eps_n^{2-d} L(\gamma),$$
as $n \to \infty$,
where $L(\gamma)$ is a positive finite quantity related to the ``natural'' (i.e. geometric) time-parametrization of the SLE loop.
We will rely on the one hand on this fact, and on the other hand, on the fact that when $\eps \to 0$, 
the measure $\eps^{d-2} (\tilde \rho^{1, \eps} \otimes \mu)^e$  converges to the same measure $\lambda$ on loops that pass through $1$ and separate the origin from infinity as 
$\eps^{d-2} (\tilde \rho^{1, \eps} \otimes \mu)^i$ does. Basically (we state the following fact as it may enlighten things,  even though we will not explicitly prove it because it is not needed in our proof), one has an expression of the type 
$$ \nu^i ( F (\gamma) ) = \lambda (\tilde F (\gamma) / L(\gamma)) = \nu^e (F (\gamma)).$$
Our proof will be based on a coupling argument that enables to compare the right-hand sides of (\ref {nue}) and (\ref {nui}).
Another fact
that it will be handy to use in the following steps is the reversibility of SLE$_\kappa$ paths for $\kappa \in (8/3, 4]$. There exist now several different proofs of this result first proved by Dapeng Zhan in \cite {Zhan}, see for instance \cite {MS1,WW2}.

\medbreak

We now come back to our actual proof of the fact that $\nu^e=\nu^i$, and state the following lemma; we postpone its 
proof  to the next and final subsection of the paper, as it involves somewhat different arguments (and results of \cite {LR} on the Minkowski content of chordal SLE paths). 
\begin {lemma}
\label {minkovski}
There exists a sequence $\eps_n$ that tends to $0$, such that for $\nu^e$ almost every loop, there exists a finite positive $L(\gamma)$ such that 
 \begin {equation}
  \label {equiv}
 \lim_{n \to \infty} \eps_n^{d-2} | V_{\eps_n}^e (\gamma) | =
 \lim_{n \to \infty} \eps_n^{d-2} | V_{\eps_n}^i (\gamma) | = L(\gamma).
 \end {equation}
\end {lemma}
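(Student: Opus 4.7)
The plan is to reduce the lemma to the Minkowski-content theory for chordal SLE$_\kappa$ developed by Lawler and Rezaei, and then to upgrade that two-sided statement to a one-sided one by a combination of moment estimates and the loop-soup inversion symmetry already established in the paper.

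First I would use the SLE-bubble description of $\nu^e$ recalled in the previous subsection. A $\nu^e$-typical loop $\gamma$ is of the form $\varphi(\beta)$, where $\beta$ is an SLE$_\kappa$ bubble (sampled from $\mu$) and $\varphi$ is a fixed conformal map on a neighborhood of $\beta$ (depending on the marked-domain sample from $\tilde\rho^{1,\eps}$). Since $\varphi$ is smooth in a neighborhood of any bounded portion of $\beta$, the Minkowski content in the gauge $r^{2-d}$ transforms covariantly under $\varphi$: up to an integrable conformal Jacobian $|\varphi'|^{d}$, the assertion for $\gamma$ is equivalent to the analogous assertion for $\beta$. It therefore suffices to prove the lemma for an SLE$_\kappa$ bubble (or, by localizing, for a chordal SLE$_\kappa$ arc, which is exactly the setting of \cite{LR}).

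Next, I would apply the main result of Lawler--Rezaei to obtain the two-sided statement: along a deterministic subsequence $\eps_n \downarrow 0$ one has
\[
\eps_n^{d-2}\,|V_{\eps_n}(\beta)| \;\longrightarrow\; 2L(\beta),
\]
almost surely, where $L(\beta)$ is proportional to the total natural length. Their proof goes through a first-moment estimate $\PR(d(z,\beta)\le\eps)\sim c\,\eps^{2-d}G(z)$, together with a two-point Green's function bound giving $L^2$-concentration. The decisive step of my plan is to refine each of these Green's function estimates to a \emph{one-sided} version. For $S\in\{i,e\}$ one computes, by the same SLE martingale arguments,
\[
\PR\!\left(d(z,\beta)\le\eps,\ z\in V^S_\eps(\beta)\right)\;\sim\;c\,\eps^{2-d}\,G^S(z),
\]
and an analogous two-point estimate; these yield $L^2$-convergence of $\eps^{d-2}|V^S_\eps(\beta)|$ to a one-sided natural length $L^S(\beta)$ (hence a.s.\ convergence along a subsequence), and in particular $L^e(\beta)+L^i(\beta)=2L(\beta)$.

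Finally, I would show $L^e=L^i$ by exploiting the inversion symmetry of the SLE$_{8/3}$ loop soup (Proposition~\ref{prop: measures under z mapsto 1/z}). Integrated against a test function, the expected one-sided contents can be rewritten, via (\ref{nue}) and (\ref{nui}), as integrals of $\tilde\rho^{1,\eps}\otimes\mu$. The inversion $z\mapsto z_0^2/z$ preserves the loop-soup and swaps the inside and outside of any surrounding loop; applied at the level of marked domains this identifies the first-moment densities $G^e$ and $G^i$ after integration against the scale-invariant factor $|z|^{-2}\,d^2z$. Combined with the $L^2$-concentration established in the previous step (which forces $L^e(\beta)$ and $L^i(\beta)$ to be a.s.\ constant multiples of $L(\beta)$), this gives $L^e(\beta)=L^i(\beta)=L(\beta)$. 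A standard diagonal extraction produces a single deterministic sequence $\eps_n\downarrow 0$ along which almost-sure convergence holds for $\nu^e$-a.e.\ $\gamma$.

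The main obstacle is the one-sided Minkowski content in Step 3: Lawler--Rezaei's arguments give the content of the full Euclidean $\eps$-neighborhood, and adapting them requires the computation of the restricted hitting probabilities $\PR(d(z,\beta)\le\eps,\, z\in V^S_\eps(\beta))$ with the correct constant, and a corresponding two-point bound. Once this technical input is in place, the symmetry step is comparatively soft, since it only needs the inversion invariance of the loop soup (already in \cite{W}), not the CLE inversion invariance that we are trying to prove.
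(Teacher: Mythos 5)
Your reduction to the bubble/chordal setting via conformal covariance of the Minkowski content is in line with the paper's own Steps 2--4, but the heart of the lemma lies elsewhere and your plan has a genuine gap there. The statement is an almost-sure one: for a $\nu^e$-typical loop, the interior and exterior $\eps_n$-neighbourhood areas of that \emph{same} loop are asymptotically equal. Your Step 4 tries to extract this from first-moment information. Inversion invariance of the SLE$_{8/3}$ loop soup (Proposition \ref{prop: measures under z mapsto 1/z}) only identifies expectations, and it relates the interior content under $\nu^e$ to the exterior content under $\nu^i$ (inversion maps $\nu^i$ to $\nu^e$); turning that into a comparison of the two sides of a single $\nu^e$-loop would already require $\nu^i=\nu^e$, i.e.\ Proposition \ref{nui=nue} --- precisely what Lemma \ref{minkovski} is an ingredient for, so the argument risks circularity. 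Moreover the parenthetical claim that the $L^2$-concentration ``forces $L^e(\beta)$ and $L^i(\beta)$ to be a.s.\ constant multiples of $L(\beta)$'' is unsupported: $L^2$ convergence of the one-sided contents gives random limits with $L^e+L^i=2L$, but nothing in your outline rules out a random split between the two sides, and equality of expectations cannot upgrade itself to almost-sure equality.

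What is actually needed --- and what the paper isolates as Lemma \ref{minkovski2} --- is a local, conditional left/right symmetry for chordal SLE$_\kappa$ with quantitative decorrelation: given that the curve comes within $\eps$ of $z$, the point lies on either side with probability $1/2+o(1)$, uniformly (estimate (\ref{firsteq}), proved via the radial diffusion (\ref{theSDE}) and a coupling of $\theta$ with $-\theta$), and this remains true given proximity to a second point $y$, thanks to the two-point estimate of \cite{LR} (Lemma \ref{secondlemma}) together with reversibility of SLE$_\kappa$ to handle the order of the hitting times. These combine into the mixed second-moment bound $\EX\left( | v_{\eps, J}^+ - v_{\eps, J}^- | \, v_{\eps, J} \right) \le f_2(\eps)\, c(J)\, \EX\left( v_{\eps,J}^2\right)$, hence $\EX\left( ( v_{\eps, J}^+ - v_{\eps, J}^-)^2\right) \to 0$, and Borel--Cantelli along a sequence with $\sum_n f_2(\eps_n)<\infty$ gives the almost-sure equality of the two sides, which then transfers to interior/exterior of the loop as you describe. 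Your outline mentions ``an analogous two-point estimate'' but never uses it to identify the two sides; without the conditional fairness plus decorrelation step, one-sided Green's functions and loop-soup inversion cannot deliver the almost-sure conclusion. If you do carry out the one-sided two-point estimates for all sign combinations and show the corresponding limits coincide, that is exactly the displayed bound above, your Step 3 alone yields the lemma, and the symmetry Step 4 becomes unnecessary.
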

Note that this is equivalent to the fact that almost surely, 
for any exterior boundary of a cluster that surrounds the origin in a full-plane loop-soup sample, (\ref {equiv}) holds. 
One could also derive the more general statement (taking the limit when $\eps \to 0$ instead of along some particular sequence $\eps_n$) but Lemma \ref {minkovski} will be sufficient 
for our purpose. 

\medbreak

Let us now explain how to use this lemma in order to conclude our proof. 
We will  couple $(\tilde \rho^{z,\eps} \otimes \mu)^e$ with 
$(\hat \rho^{z,\eps} \otimes \mu)^i$. 
Note first that because of inversion-invariance of the full-plane SLE$_{8/3}$ loop-soup, the total masses of the two measures $(\tilde \rho^{z,\eps} \otimes \mu)^e$ and $(\hat \rho^{z,\eps} \otimes \mu)^i$ are identical. In fact, these masses decay like a constant times $\eps^{2-d}$ as $\eps \to 0$ (but for what follows, it will be enough to note that they are bounded by a constant time $\eps^{2-d}$). 

Let us now describe how we define our coupling: 
First, for a choice of a marked domain by $\tilde \rho^{z, \eps}$ in our first measure, we consider the one obtained by the inversion $y \mapsto z^2 / y$ for the ``sample''
of $\hat \rho^{z, \eps}$. 
Hence, after mapping our marked domains onto the unit disc in such a way that $\infty$ and the origin are mapped onto two real symmetric points $-a$ and $a$ respectively, 
we have to compare/couple the following two measures on bubbles:
\begin {itemize}
 \item The measure on SLE$_\kappa$ bubbles in the unit disc, rooted at some point $e^{i \theta}$ and restricted to the set of bubbles that surround $a$ and not $-a$
 \item The measure on SLE$_\kappa$ bubbles in the unit disc, rooted at the point $-e^{i \theta}$ and restricted to the set of bubbles that surround $-a$ and not $a$.
\end {itemize}
By symmetry, these two measures have again the same mass. The goal is now to couple two loops $\beta^1$ and $\beta^2$ (each defined under these SLE$_\kappa$ bubble measures)
in such a way that when $a$ is small, for most realizations of $\beta^1$ and $\beta^2$, the two loops are in fact very similar in the neighborhood 
of the origin (in the disc of radius $\sqrt {a}$, say). This would indeed then imply that when mapped back onto the marked domain, the loops are very close, except in a small neighborhood of $z$ (and therefore very close everywhere).

Let us first sample progressively  a part of $\beta_1$ starting from its root $e^{i \theta}$.
One natural way to encode this exploration in the present setting is to always map the complement of the curve in the unit disc back to the unit disc,
in such a way that the two point $-a$ and $a$ are mapped onto two symmetric real values $-a_t$ and $a_t$. This fixes the conformal transformation (note
also that $a_t$ is increasing with time,  which can enable to use $a_t$ to define a convenient time-parametrization). The tip of the curve is 
mapped onto some $e^{i \varphi_t}$ while the target (i.e. one of the images of $e^{i\theta}$) is mapped onto some $e^{i \theta_t}$ on the unit circle.   
We are interested in the time 
$T$ (when it exists), which is the first time at which $ e^{i\varphi_t} = - e^{i \theta_t}$. Then at this time, after mapping back the complement of the already discovered part of $\beta^1$ to the unit disk, the remaining to be discovered path is an SLE$_\kappa$ from the random point $b:= e^{i \varphi_T}$ to $-b$ that we restrict to the event that it disconnects $-a_T$ from $a_T$. 

We shall see a little later (it will convenient to explain this in the next subsection, together with some other result proved there) that:
\begin {lemma}
\label {thealemma}
Consider the SLE bubble measure rooted at $e^{i \theta}$ and restricted to those that disconnect $-a$ from $a$.
 When $a \to 0$, the proportion of loops such that $T < \tau_{a^{3/4}}$ tends to one. 
 Here $\tau_r$ is the hitting time of the circle around $r$ in the previous parametrization.
%
%
\end {lemma}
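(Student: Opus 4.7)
The strategy is to show that under the conditioned bubble measure (restricted to bubbles disconnecting $\pm a$), the parameter $a_T$ at the antipodality time $T$ is of order $a$ in distribution as $a \to 0$. Since $a^{3/4}/a = a^{-1/4} \to \infty$, this forces $a_T \le a^{3/4}$ with probability tending to one, which is precisely the event $\{T < \tau_{a^{3/4}}\}$.

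To make this precise, rescale by $z \mapsto z/a$, sending $\pm a$ to $\pm 1$ and the unit disc $\U$ to $D_{1/a} := \{|z| < 1/a\}$. Under this change of coordinates, the conditioned bubble measure pushes forward to a measure on loops in $D_{1/a}$ rooted at $e^{i\theta}/a$ and disconnecting $\pm 1$. The symmetric marked-point parameter in the rescaled picture starts at $1$ and grows monotonically; its value at the antipodality time equals $a_T/a$, and the event of interest becomes $\{a_T/a < a^{-1/4}\}$.

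Next, one establishes that as $a \to 0$, the rescaled, normalized conditioned bubble measure converges weakly to a well-defined limit measure on loops in $\C$ that disconnect $\pm 1$ from a ``root at infinity''. This convergence should follow from a coupling argument in the spirit of Section~3 of the paper: the local picture of the bubble near the two marked points is asymptotically insensitive to the distant boundary of $D_{1/a}$, since two loop-soup pictures in $D_{1/a}$ and $D_{1/a'}$ can be coupled so that they coincide inside $D_{a^{-1/2}}$ (say) with probability tending to $1$. Under this limiting measure, the antipodality parameter is a well-defined, almost surely finite positive random variable, because any loop disconnecting $\pm 1$ from infinity must at some moment of its growth reach the symmetric configuration $\varphi = \theta + \pi$. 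Thus the distribution of $a_T/a$ converges weakly to a distribution supported on $(1, \infty)$ with no atom at $\infty$, and since $a^{-1/4} \to \infty$, one concludes $\PR(a_T/a < a^{-1/4}) \to 1$.

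\textbf{Main obstacle.} The principal difficulty is the weak convergence step: rigorously showing that the rescaled conditioned bubble measure has a non-degenerate scaling limit as $a \to 0$, and that the antipodality parameter in this limit is almost surely finite (in particular, that $a_T/a$ has no escape of mass to $\infty$). This is a scaling-limit question for the SLE$_\kappa$ bubble measure conditioned on a two-point crossing event, in the regime where the two marked points collapse onto a single point, and should be tractable via the SLE/CLE estimates already developed in the paper together with the reversibility of SLE$_\kappa$ (which naturally pairs with the symmetric configuration at time $T$). The exponent $3/4$ plays no essential role; any exponent in $(0, 1)$ would suffice, as the argument only uses that $a^{-1/4} \to \infty$.
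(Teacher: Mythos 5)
Your plan defers rather than supplies the key step. The entire content of the lemma is hidden in your ``main obstacle'': the claim that the rescaled, conditioned bubble measure has a nondegenerate scaling limit with no escape of mass, so that $a_T/a$ is tight. Nothing in the paper provides this, and the coupling you invoke ``in the spirit of Section~3'' does not apply: that argument couples nested CLEs in two large domains, whereas here you are dealing with a single SLE$_\kappa$ bubble under an infinite measure, conditioned on the rare event of disconnecting $-a$ from $a$, whose root $e^{i\theta}/a$ escapes to infinity under your rescaling. Moreover, weak convergence of the loop on compact neighbourhoods of $\pm 1$ would not suffice even if proved: $T$ and $a_T$ are functionals of the exploration of the loop from its root, so you need control of the initial portion of the curve precisely in the regime where it is at distance of order $1/a$ from the marked points --- where local weak convergence says nothing --- together with the tightness of $a_T/a$; and that tightness is a quantitative estimate of exactly the same nature as the lemma, so at its crucial point the argument is circular. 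A second, smaller issue is your identification of $\{T<\tau_{a^{3/4}}\}$ with $\{a_T/a<a^{-1/4}\}$: these are not the same event, since $\tau_{a^{3/4}}$ concerns the curve reaching the circle of radius $a^{3/4}$ around the origin while $a_T$ is a conformal coordinate. The inclusion you actually need does hold, because $a_t$ is comparable to $a/\mathrm{CR}_t(0)$ (conformal radius of the slit domain seen from the origin), so that $T\ge \tau_{a^{3/4}}$ forces $a_T/a \gtrsim a^{-3/4} \gg a^{-1/4}$; but this Koebe/hyperbolic-metric comparison is not mentioned in your write-up and must be included.

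For comparison, the paper proves the lemma without any scaling limit, by a short quantitative argument on the radial Loewner diffusion (\ref{theSDE}) in the log-conformal-radius parametrization: a uniform Harnack-type estimate shows that, conditionally on the curve reaching the circle of radius $a^{1/2}$ around the origin (uniformly in the prescribed terminal value of the angle), the probability that the angle process avoids a neighbourhood of $\pi$ or of $-\pi$ --- and hence misses the antipodal configuration --- is $o(1)$ as $a\to 0$; a second uniform estimate shows that conditioning on disconnection of $-a$ from $a$ is comparable, up to a constant factor $c^2$, to conditioning on reaching radius $2a$. Combining the two and letting the starting angle tend to $\pm\pi$ yields the statement (in fact with $a^{1/2}$ in place of $a^{3/4}$). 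Some uniform-in-$a$ estimate of this kind is indispensable and has no substitute in your outline; the weak-convergence framing alone cannot produce it.
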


By symmetry, for the begining of the loop starting from $- e^{i \theta}$, we can use exactly the symmetric path (with respect to the origin), so that at the same time $T$, the configuration is exactly symmetric one (see Figure \ref {couplingidea}). 
In both cases, modulo the conformal transformation corresponding to the curve up to time $T$, the remaining curve is just an SLE from $b$ to $-b$ in the unit disc, restricted to 
those configurations that separate $-a_T$ from $a_T$. Then, in our coupling we can use the very same SLE sample (i.e. using reversibility of the SLE) in both cases for this remaining SLE. 

\begin{figure}[ht]
\label {couplingidea}
\begin{center}
\includegraphics[scale=0.5]{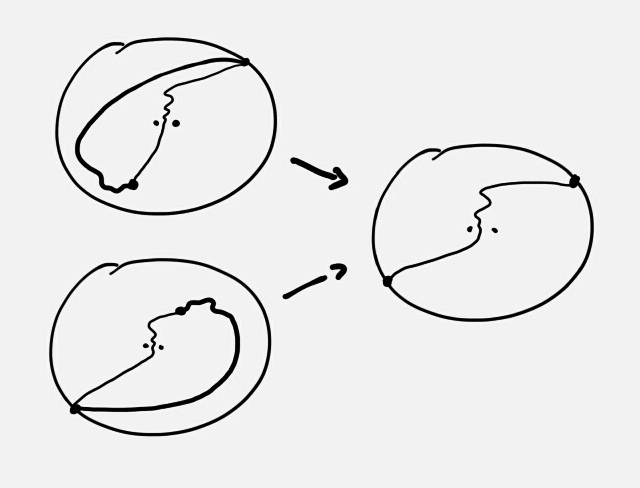}
\end{center}
\caption {The coupling idea}
\end {figure}

This therefore defines a coupling $\bar \rho^{1,\eps}$ of the two measures 
$(\tilde \rho^{z,\eps} \otimes \mu)^e$ and $(\hat \rho^{z,\eps} \otimes \mu)^i$, in such a way that $(\gamma^1, \gamma^2)$ are defined each under these two measures (with same total mass) and are very close (for a large proportion of their masses).
 Furthermore, with this coupling, 
the quantities of the type $V_\eps^i (\gamma)$ and $V_\eps^e (\gamma)$ are very close as well (when $\eps$ is small) for $\gamma^1$ and $\gamma^2$ (because except on a small piece very close to $z_0$, the two loops are the conformal images of the same SLE$_\kappa$ path under almost the same conformal transformation). 

Let 
$$\overline L (\gamma) := \sup_{n} \epsilon_n^{d-2} \max ( | V_{\epsilon_n}^i (\gamma) |, |V_{\epsilon_n}^e (\gamma) | ). $$ 
We know that this quantity is finite for almost all loops that we are considering (because of Lemma \ref {equiv}).

Let $R>0$, and choose $\Phi_R: [0, \infty) \to [0,1]$ to be some smooth function that is equal to 1 on $[2/R, \infty)$ and to $0$ on $[0, 1/R]$. We let 
$\tilde \Phi_R (\gamma) = \Phi_R (\overline L(\gamma))$.

We can then simply estimate 
$$ \left|  \nu^e ( F (\gamma) \tilde \Phi_R (\gamma) ) 
- \nu^i ( F (\gamma) \tilde \Phi_R (\gamma) )  \right|
\le \eps_n^{d-2} \bar \rho^{1, \eps_n}   \left( \left|  \frac { \tilde F (\gamma^1) \tilde \Phi_R (\gamma^1) }{ \eps_n^{d-2} | V_{\eps_n}^e(\gamma^1) |} 
- \frac { \tilde F (\gamma^2) \tilde \Phi_R (\gamma^2) }{ \eps_n^{d-2} | V_{\eps_n}^i(\gamma^2) |} \right|  \right) .
$$
By dominated convergence --- the integrand is bounded, the total mass remains bounded and the integrand 
 gets close to zero except on a smaller and smaller portion of the space --- we see that this tends to $0$ as $n \to \infty$.


Since this holds for all $F$ and $R$, we conclude that in fact $\nu^i = \nu^e$.

\subsection {Minkowski content and symmetry}

This final subsection will mostly devoted to the proof of Lemma \ref {minkovski}. 
In fact,  we will deduce it from the following similar lemma concerning chordal SLE paths (and not loops). Suppose that $J$ is non-negative continuous compactly supported function from $\overline \HH$ to $\R$ that is equal to $0$ on a neighborhood of the origin, and that $\beta$ is a simple curve from $0$ to infinity in $\HH \cup \{ 0\}$ (starting at $0$, and tending to infinity at the other end). One can then define 
$H^+ (\beta)$ and $H^-(\beta)$, the two connected components of $\HH \setminus \beta$ that respectively lie to ``its right'' and to ``its left'', and define 
$$ v_{\eps, J} (\beta)= \int_{\HH} \de^2 z \, J(z) \ind_{ d(z,\beta) < \eps},   \quad v_{\eps, J}^ + (\beta) 
  = \int_{H^ + ( \beta)}  \de^2 z \, J( z) \ind_{d(z,\beta) < \eps}$$
and $v_{\eps, J}^- (\beta) = v_{\eps, J} (\beta) - v_{\eps, J}^+ ( \beta)$.

\begin {lemma}
\label {minkovski2}
Suppose that $\kappa \in (8/3, 4]$.  There exists a sequence $\eps_n$ that tends to $0$, such that for all non-negative continuous compactly supported function $J$ defined on $\HH$, one has for almost  every chordal SLE$_\kappa$ path $\beta$, 
$
v_{\eps_n, J}^+ (\beta) \sim v_{\eps_n, J}^- (\beta) 
$
as $n \to \infty$.
\end {lemma}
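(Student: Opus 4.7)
The strategy is to combine the Minkowski-content results of Lawler--Rezaei for chordal SLE with the reflection symmetry of chordal SLE$_\kappa$ from $0$ to $\infty$ in $\HH$ (invariance in law under $\sigma\colon z\mapsto -\bar z$, since the driving Brownian motion is symmetric), and then to upgrade a distributional identity to an almost-sure one via a second-moment argument.

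First, the results of \cite{LR} give, for each continuous compactly supported $J$ vanishing near the origin, the $L^2$-convergence $\eps^{d-2}\,v_{\eps,J}(\beta) \longrightarrow \Theta_J(\beta)$ as $\eps\to 0$, where $d=1+\kappa/8$ and $\Theta_J(\beta) = \int J(\beta(t))\,dt$ is the $J$-integrated $d$-dimensional Minkowski content of the trace in the natural parametrization. Since the indicators $\ind_{H^\pm(\beta)}$ are measurable functions of $\beta$ (up to a Lebesgue-null set), the same estimates produce $L^2$-limits $\Theta^\pm_J(\beta)$ of $\eps^{d-2}v^\pm_{\eps,J}(\beta)$ with $\Theta^+_J+\Theta^-_J=\Theta_J$, and the lemma reduces to the pathwise identity $\Theta^+_J(\beta)=\Theta^-_J(\beta)$.

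Second, I would use $\sigma$: since it preserves the law of $\beta$ while swapping $H^+(\beta)$ and $H^-(\beta)$, one gets $v^+_{\eps,J}(\beta)\stackrel{d}{=}v^-_{\eps,J\circ\sigma}(\beta)$, and hence in the limit $\Theta^+_J(\beta)\stackrel{d}{=}\Theta^-_{J\circ\sigma}(\beta)$. Writing $J=J_s+J_a$ with $J_s=\tfrac12(J+J\circ\sigma)$ and $J_a=\tfrac12(J-J\circ\sigma)$, the case of $J_s$ is easy: $\Theta^+_{J_s}\stackrel{d}{=}\Theta^-_{J_s}$, and combined with $\Theta^+_{J_s}+\Theta^-_{J_s}=\Theta_{J_s}$ plus the $\sigma$-invariance of the difference, one gets $\Theta^+_{J_s}=\Theta^-_{J_s}$ almost surely.

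The main step is the antisymmetric part $J_a$. For this I would try to bound the second moment
\[
  \EX\bigl[\,(\eps^{d-2}(v^+_{\eps,J_a}(\beta)-v^-_{\eps,J_a}(\beta)))^2\,\bigr],
\]
and show that it tends to $0$ as $\eps\to 0$. This quantity expands as a double integral in $\HH\times\HH$ of $J_a(z)J_a(w)$ against a combination of two-point functions $\PR^{\pm\pm}_\eps(z,w)$ that the SLE trace comes $\eps$-close to $z$ and $w$ on prescribed sides. The aim would be to show that $\PR^{++}_\eps+\PR^{--}_\eps-2\PR^{+-}_\eps$ is of strictly lower order than the Lawler--Rezaei one-point contributions, because at Minkowski-typical points a ``zoom-in'' of chordal SLE$_\kappa$ converges to a limit that is symmetric across its local tangent direction, so the side labels at a pair of points become asymptotically independent fair coin flips. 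A geometric subsequence $\eps_n$ together with Borel--Cantelli then yields a.s.\ convergence along $\eps_n$, and a diagonalisation over a countable dense family of test functions $J$ gives the uniform-in-$J$ statement.

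The main obstacle is precisely this last two-point estimate: controlling the SLE two-point Green's function sharply enough to distinguish same-side from opposite-side configurations. The one-point Minkowski-content estimates of \cite{LR} alone are insufficient, so a genuine refinement exploiting the local fractal self-similarity of SLE appears necessary.
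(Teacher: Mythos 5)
Your handling of the symmetric part $J_s$ contains a genuine error: from $\Theta^+_{J_s}\stackrel{d}{=}\Theta^-_{J_s}$, the sum constraint $\Theta^+_{J_s}+\Theta^-_{J_s}=\Theta_{J_s}$, and the symmetry of the law of the difference, you cannot conclude $\Theta^+_{J_s}=\Theta^-_{J_s}$ almost surely (take $X$ uniform on $[0,1]$ and $Y=1-X$: then $X\stackrel{d}{=}Y$, $X+Y$ is deterministic, and the law of $X-Y$ is symmetric, yet $X\neq Y$ a.s.). The lemma is a pathwise, local statement, and the global reflection invariance of chordal SLE$_\kappa$ under $z\mapsto-\bar z$ is far too weak to yield it: on a given realization nothing prevents the curve from leaving systematically more $\eps$-area on one side within the support of $J$. (Your assertion that the one-sided quantities $\eps^{d-2}v^\pm_{\eps,J}$ have $L^2$-limits ``by the same estimates'' of \cite{LR} is also unjustified, though not essential, since only the statement about the difference is needed.)

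More importantly, the step you yourself flag as ``the main obstacle'' --- showing that the signed two-point quantity $\PR^{++}_\eps+\PR^{--}_\eps-\PR^{+-}_\eps-\PR^{-+}_\eps$ is of lower order than the two-point hitting probability --- is the entire content of the lemma, and it does not require a new ``refinement exploiting local fractal self-similarity''; it is carried out in the paper with three concrete ingredients. First, a \emph{conditional and uniform} one-point symmetry estimate: describing the curve seen from an interior point $z_0$ by the diffusion $\de\theta_u=\sqrt\kappa\,\de B_u+\frac{4-\kappa}{2}\tan\frac{\theta_u}{2}\,\de u$ and coupling $\theta$ with $-\theta$ after its first hit of $0$ gives $|\PR(z_0\in H^+\mid d(z_0,\beta)<\eps)-\PR(z_0\in H^-\mid d(z_0,\beta)<\eps)|\le f(\eps/|z_0|)$ uniformly in $z_0$. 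Second, the Lawler--Rezaei two-point estimate (Lemma \ref{secondlemma}): on $\{T_\eps(y)<T_\eps(z)<\infty\}$, with conditional probability $1-O(\eps^{b})$ the curve has not come $\eps^{1-a}$-close to $z$ before $T_\eps(y)$, so the conformal Markov property at $T_\eps(y)$ plus distortion estimates let one apply the one-point estimate to the remaining curve; reversibility of SLE$_\kappa$ handles the opposite ordering $T_\eps(z)<T_\eps(y)$. Summing and integrating against $J(y)J(z)\,\de^2y\,\de^2z$ gives $\EX(|v^+_{\eps,J}-v^-_{\eps,J}|\,v_{\eps,J})\le f_2(\eps)\,c(J)\,\EX(v_{\eps,J}^2)$, and since $|v^+_{\eps,J}-v^-_{\eps,J}|\le v_{\eps,J}$ the left-hand side dominates $\EX((v^+_{\eps,J}-v^-_{\eps,J})^2)$; choosing $\eps_n$ with $\sum_n f_2(\eps_n)<\infty$ and applying Borel--Cantelli concludes. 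So your proposal identifies the right quantity to bound but stops exactly where the proof has to be done, and the detour through the $J_s/J_a$ decomposition adds nothing while its symmetric half is incorrect.
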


Before proving this lemma, let us first explain how one can deduce Lemma \ref {minkovski} from it:

\begin {enumerate}
\item 
The arguments of \cite {LR}, see analogous statement in Theorem 3.1 of \cite{LR} (preprint version),
 go through with basically no modification in order to prove that for all given $J$ as above, for $d= 1+ \kappa/8$, and 
 for almost all SLE$_\kappa$ curve $\beta$,
 $$  \lim_{\eps \to 0} \eps^ {d-2} v_{\eps, J} (\beta) =  L_{J} (\beta), $$
 where $L_J (\beta)$ is positive on the set of curves that pass through the support of $J$. 
 The only difference with \cite {LR} is the presence of the weighting $J$ (but this is in fact also treated there in the context of the ``covariant'' properties
of the Minkowski content).
 Combining this with Lemma \ref {minkovski2} ensures that almost surely, 
 $$ 
v_{\eps_n, J}^+ (\beta) \sim v_{\eps_n, J}^- (\beta) \sim v_{\eps_n, J} (\beta)/2 \sim\eps_n^{2-d}  L_J (\beta) /2 $$
as $n \to \infty$.

\item 
Consider now instead of a chordal SLE path, an SLE bubble defined under the infinite measure $\mu$ (its description as an SLE excursion measure
is for instance described in \cite {ShW}), and keep a (compactly supported) function $J$ as before. 
Let $2r$ denote the distance between the origin and the support of $J$. We can discover a first bit of the SLE bubble until it reaches the circle of radius $r$ around the origin (if it does so, which is the case for a set of bubbles with finite $\mu$-mass). If we renormalize the measure appropriately, we can say that ``conditionally'' on this first part, the law of the remaining-to-be-discovered part of the bubble is a chordal SLE in the slit upper half-plane joining the tip of the already-discovered part to $0$. If we map this onto the upper half-plane and the tip and $0$ respectively onto $0$ and $\infty$, we see readily that the result stated in Step 1 (applied to a random function $\tilde J$ that depends on $J$ and the first part of the bubble) actually yields that for any $J$ as before, for $\mu$-almost every bubble $\beta$, 
 $$ 
v_{\eps_n, J}^i (\beta) \sim v_{\eps_n, J}^e (\beta) \sim \eps_n^{2-d} L_J (\beta) /2 
$$ 
as $n \to \infty$,
where the superscripts $i$ and $e$ now stand for the ``interior'' and the ``exterior'' of the bubble $\beta$.

\item
We are now going to restrict ourselves to the study of loops $\gamma$ in the plane that surround the origin and stay confined in a given annulus $A=\{ z \, : \,  r_1 < |z| < r_2 \}$. 
Let us define the functions $f^+ $ and $f^-$ in the plane such that $z \mapsto f^+(z) = f^- (-z)$  is equal to $1$ on $\{ z \, : \, \Re (z) > r_1 /2 \}$, to $0$ on $\{ z \,  : \, \Re (z) < -r_1 /2  \}$ and to $1/2 +  (\Re (z))/ r_1)$ in the medial strip 
$\{ \Re (z) \in [-r_1/2, r_1/2]\}$. Note that $f^+ + f^- = 1$. We claim that in order to prove Lemma \ref {minkovski}, it is sufficient to show that for $\nu^e$ almost all loop $\gamma$, 
$$
 \lim_{n \to \infty} \eps_n^{d-2} | V_{\eps_n, f^+}^e (\gamma) | =  \lim_{n \to \infty} \eps_n^{d-2} | V_{\eps_n, f^+}^i (\gamma) | = L_{f^+} (\gamma)
$$
as $n \to \infty$, where $L_{f^+}$ is positive on the set of loops under consideration (that wind around the small disc of radius $r_1$). 
Indeed, by symmetry, the same result is then true for $f^-$, and adding the two contributions (for $f^+$ and $f^-$), we get that 
$$
 \lim_{n \to \infty} \eps_n^{d-2} | V_{\eps_n}^e (\gamma) | =
 \lim_{n \to \infty} \eps_n^{d-2} | V_{\eps_n}^i (\gamma) | = (L_{f^+} + L_{f^-})(\gamma).
$$

\item 
In order to prove this statement with $f^+$, we can consider a full-plane SLE$_{8/3}$ picture, and discover all loop-soup clusters ``from the outside'', by first discovering (at once) all clusters that intersect the circle of radius $r_2$, and then ``moving inwards'' along the segment $[-r_2, -r_1]$. In this way, we can represent the measure 
$\nu^e$ (restricted to the set of loops that wind around the origin in the annulus $A$) as the image of a measure $(\rho \otimes \mu)$ under a map $(\varphi, \beta) \mapsto \varphi (\beta)$ as before, where $\varphi (0)$ is always on the segment $[-r_2, -r_1]$ which is a point in the neighborhood of which  $f^+$ vanishes. 
Applying the result of Step 2 to the function 
$$  | (\varphi^{-1})' (\cdot) |^{2-d} f^+ (\varphi^{-1} ( \cdot )), $$  
we then immediately get the statement that is needed in Step 3 (the fact that this function explodes in the neighborhood of some part of the real line does not matter, as the 
bubble almost surely remains at positive distance from those parts).
\end {enumerate}

\medbreak

It then remains to explain how to adapt the ideas of \cite {LR} in order to prove Lemma \ref {minkovski2}, and to also explain how to derive Lemma \ref {thealemma}. 
The results on the Minkowski content of chordal SLE in \cite {LR} are based on the one hand on the fact that 
when $z_0$ is a point in the upper half-plane, then one has a very good control on the probability that the SLE 
intersects the $\eps$-neighborhood of $z_0$: More precisely, if one maps the upper half-plane onto the unit disc 
by the conformal transformation $\phi$ such that $\phi (z_0)=0$ and $\phi (\infty) = -1$, then one can view 
the (chordal) SLE from $\phi(0)$ to $-1$ in the unit disc as a Loewner chain in the unit disc, viewed from the origin. 
If one re-parametrizes it via the log-conformal radius $u=u(t)$ seen from $0$, one gets easily that the argument $\theta_u$ 
of $\phi_t ( W_t)$ (where $\phi_t$ maps $\HH \setminus \beta[0,t]$ onto $\U$ with $\phi_t(\infty) = -1$ and $\phi_t (z_0) =0$) evolves according to the SDE
\begin {equation}
 \label {theSDE}
\de \theta_u = \sqrt {\kappa} \,\de B_u + \frac {4-\kappa}{2} \tan \frac {\theta_u}{2} \,\de u
\end {equation}
where $B$ is an standard one-dimensional Brownian motion.
The log-conformal radius of $\HH \setminus \gamma [0,\infty)$ seen from $z_0$ can therefore expressed as the 
life-time of the diffusion $\theta$  (the time it takes before hitting $-\pi$ or $\pi$).

One can note that in this framework, it is also easy to see on which side of $\gamma$ the point $z_0$ is. It will be in $H^+$ if and only if 
the diffusion $\theta$ hits $-\pi$ before $\pi$. As a consequence, this setup and the same arguments can be used to see 
 that when $\eps \to 0$, 
\begin {equation}
\label {thirdeq}
 \PR ( z_0 \in H^+ (\beta) \, | \,  d( \beta, z_0) \le \eps |z_0|) \to 1/2.  \end {equation}
One can for instance use a coupling argument, at the first time at which the previous diffusion $\theta$ hits $0$ (after which one couples $\theta$ with $-\theta$), and to notice that the probability that the probability that the diffusion spends a long time in $(0, \pi)$ is much smaller than the probability that it spends a long time in $(-\pi, \pi)$, regardless of the starting point of the diffusion.
This shows that the convergence in (\ref {thirdeq}) is in fact uniform with respect to $z_0$.
In other words, there exists a function $f (v)$ that decreases to $0$ as $v$ decreases to $ 0$, such that, for all $z_0$ and all $\eps < |z_0| / 2$, 
\begin {equation}
\label {firsteq}
\left|\PR ( z_0 \in H^+ (\beta) \,  | \, d(z_0, \beta) < \eps ) -  \PR ( z_0 \in H^- (\beta) \,  | \, d(z_0, \beta) < \eps ) \right| \le f(\eps/ |z_0|).
\end {equation}
From this, it follows in particular that (with the notations of Lemma \ref {minkovski2})
$E(v_{\eps, J}^+ (\beta)) \sim E( v_{\eps, J}^- (\beta))$ as $\eps \to 0$.

The second main ingredient in the proofs of \cite {LR} is a control on the second moments of 
$v_{\eps, J}$ and their variation with respect to $\eps$ (i.e. of the variations of a smoothed out version with respect to $\eps$).
One can summarize this type of result (that can be found in \cite {LR}) as follows: 
Define $T_\eps (z)$ as before as the hitting time of the disc of radius $\eps$ around $z$ by the chordal SLE$_\kappa$ $\beta$:
\begin {lemma}
\label {secondlemma}
For any compact subset $K$ of the upper half-plane, for each small $a >0$, there exists a positive $b=b(a)$ and a constant $c(K)$ such that for all $z, y$ in $K$ and all $\eps^{1-2a} < |z-y| $, 
$$ 
\PR \left( T_{\eps^{1-a}} (z) < T_\eps (y) < T_\eps (z) < \infty \right) 
   \le c(K) \, \eps^{b} \, \PR ( T_\eps (y) < T_\eps (z) < \infty ).
$$
\end {lemma}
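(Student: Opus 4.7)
The plan is to derive the bound by chaining the strong Markov property of chordal SLE$_\kappa$ with the one-point Green's-function estimates of \cite{LR}. Recall that for $\kappa \in (8/3, 4]$, $d = 1+\kappa/8 \in (1, 3/2]$, and for any compact $K \subset \HH$ and $w \in K$ one has
$$ \PR ( T_\eta(w) < \infty ) \asymp \eta^{2-d}\, G(w), $$
where $G$ is the chordal SLE$_\kappa$ Green's function, bounded above and below on $K$. This estimate is conformally covariant, so an analogous bound holds after any admissible stopping time provided the target point, after being mapped together with the slit domain back to $\HH$ with the tip at $0$, stays in a compact subset of $\HH$ at bounded distance from the new tip and from the real line.

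First I would treat the denominator. By the strong Markov property at $T_\eps(y)$ combined with the one-point estimate applied both to the event $\{T_\eps(y)<\infty\}$ and to the conditional probability of subsequently hitting $B_\eps(z)$, and using $\eps \ll |y-z|$ so that Koebe distortion keeps the image of $z$ at distance $\asymp |y-z|$ from the new tip and the real line, one obtains
$$ \PR ( T_\eps(y) < T_\eps(z) < \infty ) \asymp c(K) \cdot \eps^{2-d} \cdot \left( \frac{\eps}{|y-z|} \right)^{2-d}. $$
For the numerator, apply the strong Markov property successively at $T_{\eps^{1-a}}(z)$ and at $T_\eps(y)$. The hypothesis $|y-z| \ge \eps^{1-2a}$ yields $\eps^{1-a} \le \eps^{1-2a}\cdot \eps^a \ll |y-z|$, so at each stage Koebe distortion again keeps the successive conformal images of $y$ and $z$ at distance $\asymp |y-z|$ from the current tip. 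Three applications of the one-point estimate then yield
$$ \PR ( T_{\eps^{1-a}}(z) < T_\eps(y) < T_\eps(z) < \infty ) \le C(K) \cdot \eps^{(1-a)(2-d)} \cdot \left( \frac{\eps}{|y-z|} \right)^{2(2-d)}. $$
Taking the ratio and using $|y-z|^{-(2-d)} \le \eps^{-(1-2a)(2-d)}$, one finds
$$ \frac{\PR ( T_{\eps^{1-a}}(z) < T_\eps(y) < T_\eps(z) < \infty )}{\PR ( T_\eps(y) < T_\eps(z) < \infty )} \le C(K)\, \eps^{(1-a)(2-d)}\, |y-z|^{-(2-d)} \le C(K)\, \eps^{a(2-d)}, $$
so the lemma holds with $b = a(2-d) > 0$ since $d<2$ for $\kappa \le 4$.

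The main obstacle I anticipate is making the conformal-distortion control rigorous after the strong-Markov applications: one must verify that, with overwhelming probability on each relevant event, the Loewner map at the stopping time keeps the images of $y$ and $z$ in a compact subset of $\HH$, at mutual distance $\asymp |y-z|$ and at distance $\asymp |y-z|$ from the new tip and from the real line, so that the Green's function evaluated there is bounded above and below by constants depending only on $K$. Since the explored slit has diameter at most $O(\eps^{1-a})$ in the first stopping and at most $O(\eps)$ in the second, both much smaller than $|y-z|$, Koebe's $1/4$-theorem together with half-plane capacity/distortion estimates --- precisely of the type used in the derivation of the one- and two-point estimates in \cite{LR} --- delivers this control up to bounded multiplicative errors absorbed into $c(K)$. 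Writing this out uniformly in $z, y \in K$ and in $\eps$, and disposing of the atypical-distortion events by crude union bounds, is the technical heart of the argument.
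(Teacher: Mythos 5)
Your overall plan (one-point Green's function estimates chained via the strong Markov property) is the right toolkit, but the decisive step is wrong, and it is precisely the step where the factor $\eps^b$ is supposed to come from. In the numerator you apply the strong Markov property at $T_{\eps^{1-a}}(z)$ and then at $T_\eps(y)$, and claim that ``at each stage Koebe distortion keeps the successive conformal images of $y$ \emph{and} $z$ at distance $\asymp |y-z|$ from the current tip.'' For $z$ at the time $T_\eps(y)$ this cannot hold: on the very event you are estimating, the curve has already entered $B_{\eps^{1-a}}(z)$, so the conformal radius of $z$ in the slit domain is $\lesssim \eps^{1-a} \ll |y-z|$ from that time onward. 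The honest one-point upper bound for the third factor is therefore only $(\eps/\eps^{1-a})^{2-d}=\eps^{a(2-d)}$, not $(\eps/|y-z|)^{2-d}$, and with that correction your numerator bound becomes $\lesssim \eps^{(1-a)(2-d)}\,(\eps/|y-z|)^{2-d}\,\eps^{a(2-d)}=\eps^{2-d}(\eps/|y-z|)^{2-d}$, which is comparable to your denominator estimate: the crude three-fold chaining yields a ratio bounded by a constant and no power of $\eps$ at all. (A secondary gap: you also use the \emph{lower} bound $\PR(T_\eps(y)<T_\eps(z)<\infty)\gtrsim \eps^{2(2-d)}|y-z|^{d-2}$, which is a genuine ordered two-point Green's function lower bound and needs more than Koebe distortion, since at time $T_\eps(y)$ the image of $z$ may be badly placed; one must restrict to hits occurring in ``good'' configurations.)

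The actual source of the gain is different in nature, and your sketch does not contain it: after the curve has come within $\eps^{1-a}$ of $z$ and then travelled to distance $\ge \eps^{1-2a}$ (to reach $B_\eps(y)$), the point $z$ is left at the bottom of a fjord of scale $\eps^{1-a}$; under the uniformizing map at time $T_\eps(y)$ its image is pushed very close to the real line relative to its distance to the tip (a Beurling/harmonic-measure estimate), and it is the angular ($\sin\theta$) factor in the sharp one-point estimate --- not the radius ratio --- that then makes the conditional return probability smaller by a positive power of $\eps$ than the generic return from conformal radius $\eps^{1-a}$. Making this quantitative is exactly the content of the escape-and-return estimates in Sections~2.3 and 4.2 of the preprint version of \cite{LR}, which is where the paper itself sends the reader instead of giving a proof; so your write-up is not a variant of the paper's argument but an incomplete reconstruction of the cited one, and as it stands the claimed bound $b=a(2-d)$ is not established.
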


We do not repeat the proof here, but it can be found in
Sections~2.3 and 4.2 of the preprint version of \cite{LR}.

This estimate means in particular, that if one conditions on $T_\eps (y) < T_\eps (z) <   \infty$, then with a large conditional probability, at time $T_\eps (y)$, the SLE 
did not get too close to $z$ yet. But by the previous estimate applied to the SLE paths after this time, the conditional probabilities that it gets then close to $z$ and passes to its right is very close to the conditional probability that it gets very close to $z$ and passes to its left. 

Let us be more specific: Let us define the event $E := \{ T_\eps (y) < \infty, \, T_\eps (y) <  T_{\eps^{1-a}} (z) \}$. 
Note that $E \subset \{ T_\eps (y) < \infty \}$ and that $E$ is measurable with respect to $\beta_{[0, T_\eps (y)]}$. 
Clearly,
\begin {align*}
\lefteqn {  
\left|  \PR \big( T_\eps (y) <  T_\eps (z) < \infty, \; z \in H^+ ( \beta) \big) 
     -  \PR \big( T_\eps (y) <  T_\eps (z) < \infty, \; z \in H^- ( \beta) \big) \right| }  \\
 &\leq
 \PR ( T_{\eps^{1-a}} (z) < T_\eps (y) < T_\eps (z) < \infty )   \\ 
 &  \qquad
   + \left| \EX \left[ \ind_{E}  
      \left( \;\PR\left[ T_\eps (z) < \infty, \; z \in H^+ ( \beta) \,\Big|\,  \beta_{[0, T_\eps (y)]} \right] 
           - \PR\left[ T_\eps (z) < \infty, \; z \in H^- ( \beta) \,\Big|\,  \beta_{[0, T_\eps (y)]} \right] \; 
      \right) \right] \right| .
 \end {align*}
 But on the event, $E$, conditionally on $\beta_{[0, T_\eps (y)]}$, if one applies the conformal Markov property at the time $T_\eps (y)$, simple distortion estimates yield that the disc of radius $\eps$ around $z$ gets mapped to a shape that is very close to a disc around the image of $z_0$, and that the radius of this disc is much smaller than the distance of this image to the real line. Furthermore, all these estimates are uniform enough, so that we can use (\ref {firsteq}) in order to conclude that in fact
 that the last line in the previously displayed equation is bounded by 
 $$   f_1 (\eps) C'(K)  \PR ( T_\eps (y) < T_\eps (z) < \infty ) $$ 
 and therefore 
 \begin {eqnarray*}
 &&  \left|  \PR \left( T_\eps (y) <  T_\eps (z) < \infty, \, z \in H^+ ( \beta) \right) 
          -  \PR \left( T_\eps (y) <  T_\eps (z) < \infty, \, z \in H^- ( \beta) \right) \right| \\
 &&\le    f_2 (\eps) \, C'(K) \, \PR \left( T_\eps (y) < T_\eps (z) < \infty \right)
 \end {eqnarray*}
for some functions $f_1$ and $f_2$ that tend to $0$ at $0$.

Using reversibility of SLE$_\kappa$ and applying the same reasoning to the curve $-1/\beta$, 
one can gets the similar bound  
\begin {eqnarray*} 
&& {
 \left|  \PR \left( T_\eps (z) <  T_\eps (y) < \infty, \, z \in H^+ ( \beta) \right) 
      -  \PR \left( T_\eps (z) <  T_\eps (y) < \infty, \, z \in H^- ( \beta) \right) \right| } \\
&& \le f_2( \eps) \, C''(K) \, \PR \left( T_\eps (z) <  T_\eps (y) < \infty \right) .
\end {eqnarray*}
Hence, adding up the two previous bounds, we get that
\begin {eqnarray*}
 && {
 \left|  \PR \left( T_\eps (y) < \infty, \, T_\eps (z) < \infty, \,{ z \in H^+ ( \beta) }\right) 
      -  \PR \left( T_\eps (y) < \infty, \, T_\eps (z) < \infty, \,{ z \in H^- ( \beta) }\right) \right| }
\\
&& \le f_2( \eps) \,C'''(K)\,
 \PR ( T_\eps (y) < \infty,   T_\eps (z) < \infty ) .
 \end {eqnarray*}
 Integrating this inequality with respect to $\de^2 y \, \de^2 z \, J(y) \, J(z)$, one gets
$$
\EX \left( | v_{\eps, J}^+ (\beta) - v_{\eps, J}^- (\beta) | \times  v_{\eps, J} ( \beta)  \right) 
  \le f_2(\eps) \, c(J) \, \EX \left( (v_{\eps, J} (\beta))^2 \right).$$
But clearly, the left-hand side is larger than
 $$ \EX \left( ( v_{\eps, J}^+ (\beta) - v_{\eps, J}^- (\beta))^2 \right)$$
 so that  Lemma \ref {minkovski2} follows via the Borel-Cantelli Lemma (just choose $\eps_n$ such that $\sum f_2 (\eps_n) < \infty$).
Here we used that $\EX \left( (v_{\eps, J} (\beta))^2 \right)$ is uniformly bounded in $\eps$,
see analogous statement in Theorem~3.1 of \cite{LR} (preprint version).

 \medbreak
We now very briefly indicate how to prove Lemma \ref {thealemma}. 
Let us come back to the description of chordal SLE via the SDE (\ref {theSDE}) 
$$
\de \theta_u = \sqrt {\kappa} \,\de B_u + \frac {4-\kappa}{2} \tan \frac {\theta_u}{2} \, \de u
$$
and describe briefly the outline of this rather standard argument.
\begin {enumerate}
 \item 
 Let $\sigma$ denote the first time at which the SLE curve generated by $\theta_u$ (in this $u$-parametrization) reaches the circle of 
 radius $a^{1/2}$ around the origin (if it does so this time is actually deterministic and equal to $\log (1/a) /2$, and otherwise we let $\sigma= \infty$). 
 A standard Harnack inequality type argument shows that, uniformly over the choices of 
 the starting point $\hat \theta $ and the end-point $\tilde \theta$  in $(- \pi, \pi)$,
 $$
 \PR \left( 
    \theta [0, \sigma] \subset (-\pi, 3\pi/4)  \hbox { or } 
    \theta [0, \sigma] \subset (-3\pi/4, \pi)  \,  \left| \, 
    \sigma < \infty, \, \theta_0=\hat \theta , \, \theta_\sigma=\tilde \theta \right. \right) \to 0
 $$
 as $a \to 0$. 
 \item
 Since the previous statement is uniform in $\tilde \theta$, it follows also that 
 $$
 \PR \left( 
    \theta [0, \sigma] \subset (-\pi, 3\pi/4)  \hbox { or } 
    \theta [0, \sigma] \subset (-3\pi/4, \pi)  \,  \left| \, 
    \sigma' < \infty, \, \theta_0=\hat \theta , \, \theta_{\sigma'}=\tilde \theta \right. \right) \to 0
 $$
 as $a \to 0$, where this time $\sigma'$ is the time at which the SLE reaches the circle of radius $2a$ around the origin, if it does so.
\item
Next, we notice that there exists $c>0$ so that for all $a$ small enough, and uniformly with respect to $\hat \theta$, 
$$ 
\PR \left( \theta_{\sigma'} \in (-\pi/2, \pi/2) \, \left| \, 
  \theta_0 = \hat \theta , \, \sigma' < \infty \right. \right) \ge c 
$$
and
$$ 
\PR \left( \gamma \hbox { disconnects } -a  \hbox { from } a  \, \left| \, 
  \sigma' < \infty, \, \theta_0=\hat \theta , \, \theta_{\sigma'} \in (-\pi/2, \pi/2) \right. \right) \ge c.
$$
If we put the pieces together, it follows that uniformly with respect to the starting point $ \theta_0$, we have for all small $a$, 
$$ 
c^2 \; \PR ( \sigma' < \infty ) \le \PR ( \gamma \hbox { disconnect } -a \hbox { from } a )
\le \PR ( \sigma' < \infty).$$
\item
We combine these estimates to get
\begin{align*}
 &\PR \left( 
    \theta [0, \sigma] \subset (-\pi, 3\pi/4)  \hbox { or } 
    \theta [0, \sigma] \subset (-3\pi/4, \pi)  \,  \left| \, 
    \gamma \hbox { disconnects } -a  \hbox { from } a ,  \, \theta_0=\hat \theta 
    \right. \right) \\
 &\leq  \frac{1}{c^2}
    \PR \left( 
    \theta [0, \sigma] \subset (-\pi, 3\pi/4)  \hbox { or } 
    \theta [0, \sigma] \subset (-3\pi/4, \pi)  \,  \left| \, 
    \sigma' < \infty ,  \, \theta_0=\hat \theta 
    \right. \right) 
    \to 0
\end{align*}
uniformly over the choices of the starting point $\hat \theta $.
If we let the starting point $\theta_0$ converge to $\pm \pi$, the lemma follows. 
\end {enumerate}

\bigbreak
\noindent
{\bf Acknowledgements.}
We acknowledge support of the Academy of Finland, of
 the Agence Nationale pour la Recherche under the grant  ANR-MAC2, of the Fondation Cino Del Duca and the Einstein Foundation, and the hospitality of Universit\'e Paris-Sud, of TU Berlin, the University of Cambridge and FIM Z\"urich, where part of this work has been elaborated in these past years.
We thank Greg Lawler for inspiring discussions, and Jason Miller and Sam Watson, David Wilson and Hao Wu for  their useful comments.

Department of Mathematics and Statistics

P.O. Box 68 

00014 University of Helsinki, Finland

\medbreak

Department of Mathematics

ETH Z\"urich, R\"amistr. 101

8092 Z\"urich, Switzerland

\medbreak

antti.h.kemppainen@helsinki.fi

wendelin.werner@math.ethz.ch

\end{document}